\renewcommand\r{\rangle}
\renewcommand\l{\langle}
\newcommand{\bV}{\boldsymbol{V}}
\newcommand\cal{\mathcal}
\newcommand\R{\mathbb{R}}
\newcommand \C{\mathbb{C}}
\newcommand\Q{\mathbb{Q}}
\newcommand\Z{\mathbb{Z}}
\newcommand\N{\mathbb{N}}
\newcommand\G{{\bf\Gamma}}
 \newcommand\T{\mathbb T}
\newcommand\B{\mathcal{B}}
\newcommand{\E} {{\mathcal E}}
\newtheorem{Thm}{Theorem}[section]
\newtheorem{Lemma}[Thm]{Lemma}
\newtheorem{Ex}[Thm]{Example}
\newtheorem{Cor}[Thm]{Corollary}
\newtheorem{Prop}[Thm]{Proposition}
\theoremstyle{remark}
\begin{document}

     \title{Exponential bases on modified domains}
\author{Oleg Asipchuk}
\address{Oleg Asipchuk,  Florida International University,
	Department of Mathematics and Statistics,
	Miami, FL 33199, USA}
\email{aasip001@fiu.edu}

\subjclass[2020]{Primary: 42C15  
	Secondary classification: 42C30.  }
\begin{abstract}
The stability of exponential bases on domains in $\R^n$ has been widely studied, with much of the research focusing on small perturbations of the phase. In this paper, we consider bases on measurable domains of the real line, and their stability under small modifications of the domain is investigated. 
\end{abstract}
\maketitle

\section{Introduction}  
The main purpose of this paper is to explore the stability of exponential bases under small modifications of the domain.

An exponential basis on a domain $D\subset \R^d$ is an unconditional  Sch\"auder basis for $L^2(D)$ in the form of  $\{e^{2\pi i \lambda_n \cdot x}\}_{n\in\Z^d  }$ with  $\lambda_n \in\R^d$.   A canonical example of exponential basis is the Fourier basis $ \E=\{e^{2\pi i n   x}\}_{n\in\Z   }$  for $L^2([0,1))$. In this case, the domain is an interval on the real line, and the corresponding set of exponent forms a complete and orthonormal basis for $L^2(0,1)$.

The study of exponential bases on domains in $\mathbb{R}^n$ is a topic of significant interest in harmonic analysis and approximation theory. Unfortunately, the existence of exponential bases on general domains remains largely unexplored. For example, we do not know if exponential bases on triangular and circular domains exist or not. The existence of exponential bases on finite unions of intervals in $\R$  and on multirectangular domains (i.e., unions of rectangles with sides parallel to the coordinate axes) of $\R^n$ is proved in \cite{KozmaNitzan2015,KozmaNitzan2016}. See also \cite{Marzo2006}.  The bases produced in these papers and their frame bounds are not explicit, but see e.g. \cite{DeCarli2019,Fuglede1971,Kadec} and also \cite{asipchuk_drezels_2023} for explicit examples of bases. Further examples of domains with exponential bases can be found in \cite{DebernardiLev2022, GREPSTAD20141, LyubarskiiRashkovskii2000} and \cite{DeCarliKumar2015,Lee2024,LeeGotzWalnut2023} and their references. Additionally, sets with no exponential basis were found recently, see \cite{KNO2023}.

A classical problem in this area involves understanding the stability properties of exponential bases under various perturbations. Most existing research has primarily focused on the stability of these bases concerning small changes in the phase or frequency parameters. For examples, refer to \cite{Avdonin1974, Balan1997, Kadec, SunZhou1999} and the references therein.

A less explored yet equally important aspect is the stability of exponential bases when the underlying domain itself is changed.  Precisely,  if $\B=\{e^{2 \pi i \lambda_n x}\}$ is an exponential basis on a domain $D\subset\R^d$ and $\tilde D\subset\R^d$ is such that $|D\setminus \tilde D|< \epsilon$ is $\B$ still a basis on $\tilde D$? The problem does not have a simple answer even for very simple domains.

\subsection{The problem}
In this article, we study the effect of domain modifications on exponential bases on unions of intervals of the real line. The domain choice is motivated by the fact that any set of finite measures in $\R$ can be approximated by finite unions of intervals with rational endpoints. Any such union of intervals can be mapped into a union of intervals of intervals with integer endpoints, and so it is natural to treat unions of intervals with integer endpoints as a representative model. 

Let $s\in \N$ and $M_j\in \N\cup\{0\}$ for $j=0,...,s-1$. Also, we assume that $0=M_0<a_1<...<a_{s-1}$. We consider the set
\begin{equation}\label{E-I}
    \cal I_s=\bigcup_{j=0}^{s-1} [a_j,a_j+1).
\end{equation}
If $a_j$ (mod $s$) are distinct, then 
$$
\B=\left\{e^{2 \pi i \frac{nx}{s}}\right\}_{n\in\Z}=\bigcup_{j=0}^{s-1}\left\{e^{2 \pi i \left(n+\frac{j}{s}\right)x}\right\}_{n\in\Z}
$$
is an orthogonal exponential basis for $L^2(I)$ with $A=B=s$ as shown in Proposition \ref{P-basisMod}. Next, we consider a set of perturbations $\E=\{\epsilon_j\}_{j=0}^{s-1}\subset \Q$ and new union of intervals
$$
\cal I_{s,\E}=\bigcup_{j=0}^{s-1} [M_j+\epsilon_j,M_j+\epsilon_j+1).
$$
We keep the first interval in the same position, i.e. $\epsilon_0=0$. We show with Example \ref{Ex-Theorem-main}  that for some choice of the $\epsilon_j$, $\B$ is no longer a basis for $L^2(\cal I_{s,\E})$.   However,  the next theorem shows that we can obtain a basis on $I_{s,\E}$ from a small perturbation of the basis $\B$. 

\begin{Thm}\label{T-main}
    Let $s\in \N$ and $\{\epsilon_l\}_{l=1}^{s}\subset \Q$ with a property $\sup_l |\epsilon_l|<\frac12$. Let $N$ be the least common denominator of $\{\epsilon_l\}_{l=1}^{s}$ and $\beta$ be a solution of the equation
\begin{equation}\label{E-beta}
    \frac{\sin{(\pi sN \beta)}}{\sin{(\pi \beta)}}=sN\sin{\frac{1}{sN}}
\end{equation}   
    on the interval $\left(0,\frac{1}{sN}\right)$. If  
    \begin{equation}\label{E-delta}
        \frac{1}{2s^2N^3 m } \leq |\delta|\leq \frac{1}{sN^2 m}-\frac{\beta}{N m},
    \end{equation} 
    where $m:=N(a_{s-1}+\epsilon_{s-1})$ and $a_j$ (mod $s$) are distinct, then 
    $$
    \B_{I_{\E},\delta} = \bigcup_{j=0}^{s-1}\left\{e^{2 \pi i \left(n+\frac{j}{s}+j\delta \right)x} \right\}_{n\in\Z}
    $$ 
   is a basis for $L^2(\cal I_{s,\E})$ with Riesz constants 
$$
\frac{1}{N}\left(1-sN\sin{\frac{1}{sN}}\right) \left(sN-  \frac{\sin{\left(\frac{\pi}{2 N m} \right)}}{\sin{\left(\frac{\pi}{2 sN^2 m} \right)}}\right)  \leq A 
$$
and
$$
   B\leq \frac{1}{N}\left(1+sN\sin{\frac{1}{sN}}\right)  \left(sN+  \frac{\sin{\left(\frac{\pi}{2 N m} \right)}}{\sin{\left(\frac{\pi}{2 sN^2 m} \right)}}\right).
$$
\end{Thm}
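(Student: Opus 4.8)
\textit{Overview.} The plan is to pass by a dilation to a union of unit intervals with integer endpoints, recast the Riesz-basis property as a uniform two-sided bound on the singular values of a finite matrix symbol, and then prove that bound, separating a contribution governed by the scale $m$ from one governed by the scale $\tfrac1{sN}$, the latter being where equation \eqref{E-beta} enters.

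\textit{Reduction to a matrix symbol.} Write $\epsilon_l=p_l/N$, $p_l\in\Z$, $p_0=0$, and let $\cal D_N\colon L^2(\cal I_{s,\E})\to L^2(N\cal I_{s,\E})$ be the unitary $(\cal D_Nf)(y)=N^{-1/2}f(y/N)$; it carries $\B_{I_{\E},\delta}$ onto the exponential system with frequencies $\tfrac nN+\tfrac j{sN}+\tfrac{j\delta}N$ ($n\in\Z$, $0\le j<s$) on $L^2(N\cal I_{s,\E})$. Here $N\cal I_{s,\E}=\bigcup_{j}[Na_j+p_j,\,Na_j+p_j+N)$ is a disjoint union of $sN$ unit intervals with integer left endpoints $\beta_0,\dots,\beta_{sN-1}\in[0,m+1)$, $m=N(a_{s-1}+\epsilon_{s-1})$, and since $(r,j)\mapsto rs+j$ is a bijection $\{0,\dots,N-1\}\times\{0,\dots,s-1\}\to\{0,\dots,sN-1\}$, the frequency set is exactly $\bigcup_{p=0}^{sN-1}(\Z+\mu_p)$ with $\mu_p=\tfrac p{sN}+\tfrac{(p\bmod s)\delta}N$ (for $\delta=0$ it is $\tfrac1{sN}\Z$, and for $\delta$ in the range \eqref{E-delta} the $\mu_p$ are pairwise distinct mod $1$, since $|\delta|$ is far too small to bridge the gap $\tfrac1{sN}$). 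A standard periodization argument for exponential systems on unions of unit intervals then identifies the analysis operator of $\{e^{2\pi i\mu_p y}\}_p$, up to unitaries, with multiplication by the $sN\times sN$ matrix $\mathcal M=(e^{-2\pi i\mu_p\beta_q})_{0\le p,q<sN}$; hence $\B_{I_{\E},\delta}$ is a Riesz basis for $L^2(\cal I_{s,\E})$ iff $\mathcal M$ is invertible, with Riesz constants $A=\tfrac1N\sigma_{\min}(\mathcal M)^2$ and $B=\tfrac1N\sigma_{\max}(\mathcal M)^2$ (the factor $\tfrac1N$ from the normalization $N^{-1/2}$ in $\cal D_N$). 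When $\delta=0$ and the $\beta_q$ form a complete residue system mod $sN$, $\mathcal M$ is a permutation of the $sN$-point DFT, so $\sigma_{\min}=\sigma_{\max}=\sqrt{sN}$, i.e.\ $A=B=s$ after undilating — this is Proposition \ref{P-basisMod}.

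\textit{The singular-value estimate.} For $\delta\ne0$, and \emph{without} any hypothesis on the $\beta_q$ mod $sN$, I would write $\mathcal M_{p,q}=e^{-2\pi i p\beta_q/(sN)}\cdot e^{-2\pi i(p\bmod s)\delta\beta_q/N}$, splitting a lattice/DFT part dressed by the endpoint shifts from a frequency-tilt part, and factor $\mathcal M=\mathcal M^{(1)}\mathcal M^{(2)}$ accordingly (or, equivalently, compute $\mathcal M\mathcal M^{*}$ directly); then $\sigma_{\min}(\mathcal M)\ge\sigma_{\min}(\mathcal M^{(1)})\sigma_{\min}(\mathcal M^{(2)})$ and $\sigma_{\max}(\mathcal M)\le\sigma_{\max}(\mathcal M^{(1)})\sigma_{\max}(\mathcal M^{(2)})$, so it remains to bound each factor. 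For the lattice part, comparison with the plain DFT via $|e^{2\pi i t\beta_q}-1|=2|\sin(\pi t\beta_q)|\le 2\sin(\pi|t|\,m)$ — legitimate precisely because \eqref{E-delta} forces the relevant arguments into $[0,\tfrac\pi2]$ — together with a geometric/Dirichlet-sum estimate gives squared singular values in $[\,sN-q_2,\ sN+q_2\,]$ with $q_2=\sin(\tfrac\pi{2Nm})/\sin(\tfrac\pi{2sN^2m})$. For the tilt part, the entries depend on $p$ only through $p\bmod s$, so the extremal oscillatory quantity over the admissible $\delta$-window is a Dirichlet-type sum $\sin(\pi sN\beta)/\sin(\pi\beta)$ of $sN$ exponentials, whose balance against $sN\sin\tfrac1{sN}$ occurs exactly at the root $\beta\in(0,\tfrac1{sN})$ of \eqref{E-beta}, giving squared singular values in $[\,1-sN\sin\tfrac1{sN},\ 1+sN\sin\tfrac1{sN}\,]$. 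Multiplying the two and dividing by $N$ produces the asserted Riesz constants.

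\textit{The admissible range of $\delta$, and the main obstacle.} The upper bound in \eqref{E-delta} keeps every phase increment in $[0,\tfrac\pi2]$, so the sine estimates apply and $\sigma_{\min}(\mathcal M)^2$ stays positive; the lower bound keeps $\delta$ away from $0$, where $\mathcal M$ can be singular and $\B=\B_{I_{\E},0}$ fails to be a basis on $\cal I_{s,\E}$ (Example \ref{Ex-Theorem-main}). One also checks that \eqref{E-beta} has a root in $(0,\tfrac1{sN})$ — there its left side decreases from the Dirichlet-kernel limit $sN$ down to $0$ while the right side lies strictly between — and that \eqref{E-delta} describes a non-empty interval, i.e.\ $\beta<\tfrac1{sN}-\tfrac1{2s^2N^2}$; both follow from the intermediate value theorem and the monotonicity of $\sin$ on $[0,\tfrac\pi2]$. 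The genuinely hard point is the lower bound for $\sigma_{\min}(\mathcal M^{(1)})$: the frequency tilt is \emph{not} a small perturbation — the products $(p\bmod s)\delta\beta_q/N$ are of order one — so no Neumann-series or Bari-type argument is available, and one must instead exploit the coherent $p\bmod s$ structure to reduce the extremal sum to $\sin(\pi sN\beta)/\sin(\pi\beta)$ and balance it against $sN\sin\tfrac1{sN}$, which is exactly the content of \eqref{E-beta}.
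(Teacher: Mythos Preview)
Your reduction to the $sN\times sN$ matrix $\mathcal M$ is correct and matches the paper's use of Theorem~\ref{T-matrix} after dilating by $N$. The gap is in the singular-value estimate. Writing $\mathcal M_{p,q}=e^{-2\pi i p\beta_q/(sN)}\cdot e^{-2\pi i(p\bmod s)\delta\beta_q/N}$ is an \emph{entrywise} (Hadamard) product, not a matrix product, so there is no factorization $\mathcal M=\mathcal M^{(1)}\mathcal M^{(2)}$ with $\sigma_{\min}(\mathcal M)\ge\sigma_{\min}(\mathcal M^{(1)})\sigma_{\min}(\mathcal M^{(2)})$; for Hadamard products that inequality is false in general. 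Worse, your ``tilt part'' $\mathcal M^{(2)}_{p,q}=e^{-2\pi i(p\bmod s)\delta\beta_q/N}$ has rows depending on $p$ only through $p\bmod s$, hence rank at most $s<sN$, so $\sigma_{\min}(\mathcal M^{(2)})=0$ and the claimed lower bound $[1-sN\sin\tfrac1{sN},\,1+sN\sin\tfrac1{sN}]$ on its squared singular values cannot hold. The hypotheses $|\epsilon_l|<\tfrac12$ and $a_j\pmod s$ distinct are also never used in your argument, which is a warning sign: they are essential.

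The paper's mechanism is different. It does not factor $\mathcal M$; it \emph{clusters its columns}. The nodes $p_k$ (wrapped by $\tfrac1{sN}+\delta$) are sorted into groups of size at most two --- this is exactly where $|\epsilon_l|<\tfrac12$ and the residue condition enter, ensuring no three nodes collide mod $sN$. For each $2\times sN$ cluster submatrix the Gram matrix is $2\times2$ and its eigenvalues are computed explicitly as $sN\pm|b|$ with $|b|=\bigl|\sin(\pi sN(p_k-p_{k'})\delta)/\sin(\pi(p_k-p_{k'})\delta)\bigr|$, giving the $sN\pm\sin(\tfrac{\pi}{2Nm})/\sin(\tfrac{\pi}{2sN^2m})$ factor. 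The $(1\pm sN\sin\tfrac1{sN})$ factor comes not from a second matrix but from the \emph{angle between clusters}: Lemmas~\ref{L-p_k=msN} and~\ref{L-p_k=1} bound the inter-cluster inner products by $sN\sin\tfrac1{sN}$ on the $\delta$-range \eqref{E-delta}, and then Lemma~\ref{L-Lemma5.1} converts near-orthogonality of the column subspaces into the multiplicative correction $\sqrt{1\pm sN\alpha}$ on the cluster singular values. So the product form of the Riesz bounds is (angle factor)$\times$(cluster singular value), not a product of singular values of two full matrices.
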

The existence of $\beta$ that satisfies \eqref{E-beta} and the existence of $\delta$ that satisfies \eqref{E-delta} are justified by Lemma \ref{L-p_k=msN} and Proposition \ref{P-inequalities} in Section 2.

Next, we consider a basis on the union of the intervals $\cal I_M$ inside the interval $I_N=[0,N)$, the solution to which we provide in the next theorem.  In Proposition \ref{P-basis} we prove that
$$
\B_{M} = \bigcup_{j=1}^{M} \left\{e^{2\pi i \left(n+\frac{j-1}{N}\right)x}\right\}_{n\in\Z}
$$
is a basis for $L^2(\cal I_M)$. Estimating the Riesz constants is often the most challenging aspect of the problem, and we address this in the following theorem  
\begin{Thm}\label{T-main-3}
    Let $N,M\in\N$ such that $2<M\leq\frac{N}{2}$. Also, let $u\in\N$ and 
\begin{equation}\label{E-u}
     u > \begin{cases} 
    \frac{N}{\pi} \cos^{-1}{\left(M \sin{\frac{1}{M}}\right)} \,  & N \text{ is even}\\
   \frac{N}{\pi}\cos^{-1}{\left(M \sin{\frac{1}{M}} \cos{\frac{\pi}{2 N}}\right)}- \frac{1}{2} \, & N \text{ is odd}
    \end{cases}.
\end{equation}
If all distinct $a_k$ and $a_{k'}$ satisfy the following inequality
\begin{equation}\label{E-condition-T3-1}
    \min_{m\in \Z} \left\lvert \frac{a_k-a_{k'}}{N} - m \right\rvert - \min_{m\in \Z} \left\lvert \frac{(a_k-a_{k'})M}{N} - m \right\rvert > \frac{u}{N},
\end{equation}
then $\B_{M}$ is a basis for $L^2(\cal I_M)$ with 
\begin{equation*}
    \begin{cases} 
    M\left(1-\left\lvert \cos{\frac{\pi u}{N}} \right\rvert \right) \leq A \text{ and } B \leq   M\left(1+\left\lvert \cos{\frac{\pi u}{N}} \right\rvert \right) \,  & N \text{ is even}\\
   M\left(1-\left\lvert \frac{\cos{\left(\frac{\pi}{2N}+\frac{\pi u}{N}\right)}}{\cos{\frac{\pi}{2N}}} \right\rvert\right)  \leq A \text{ and } B \leq   M\left(1+\left\lvert \frac{\cos{\left(\frac{\pi}{2N}+\frac{\pi u}{N}\right)}}{\cos{\frac{\pi}{2N}}} \right\rvert \right) \, & N \text{ is odd}
    \end{cases}.
\end{equation*}
\end{Thm}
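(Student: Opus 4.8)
The plan is to diagonalize the frame operator of $\B_M$ through the natural fiber decomposition of $L^2(\cal I_M)$. Write $\cal I_M=\bigcup_{k=1}^{M}[a_k,a_k+1)$, a disjoint union of unit intervals with integer left endpoints; then $U\colon L^2(\cal I_M)\to L^2([0,1))\otimes\C^M$, $(Uf)(x)=\big(f(x+a_1),\dots,f(x+a_M)\big)$, is unitary. Since every $a_k\in\Z$, one computes $U\,e^{2\pi i\left(n+\frac{j-1}{N}\right)x}=e^{2\pi i\left(n+\frac{j-1}{N}\right)x}\otimes w_j$, where $w_j:=\big(e^{2\pi i(j-1)a_k/N}\big)_{k=1}^{M}\in\C^M$ is independent of $n$. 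First I would combine this with the fact that, for each fixed $j$, the family $\{e^{2\pi i(n+\frac{j-1}{N})x}\}_{n\in\Z}$ is an orthonormal basis of $L^2([0,1))$, to show that for every $g\in L^2(\cal I_M)$, with $G:=Ug$,
\[ \sum_{j=1}^{M}\sum_{n\in\Z}\Big|\Big\langle g,\, e^{2\pi i\left(n+\frac{j-1}{N}\right)\cdot}\Big\rangle\Big|^2=\int_0^1\big\langle W^{*}W\,G(x),\,G(x)\big\rangle_{\C^M}\,dx,\qquad W_{jk}:=e^{-2\pi i(j-1)a_k/N}. \]
Hence the frame operator of $\B_M$ is unitarily equivalent to $I_{L^2(0,1)}\otimes W^{*}W$; since $\B_M$ is a basis by Proposition \ref{P-basis}, its Riesz constants are $\lambda_{\min}(W^{*}W)$ and $\lambda_{\max}(W^{*}W)$, and everything reduces to estimating these two eigenvalues.

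Next I would compute $W^{*}W$. Its diagonal entries equal $M$, and for $k\neq k'$ the entries are geometric sums, so $W^{*}W=MI+E$ with $E_{kk}=0$ and
\[ |E_{kk'}|=\left|\frac{\sin\big(\pi M(a_k-a_{k'})/N\big)}{\sin\big(\pi(a_k-a_{k'})/N\big)}\right|=\frac{\sin(\pi\delta_2)}{\sin(\pi\delta_1)}, \]
where $\delta_1:=\min_{m\in\Z}|(a_k-a_{k'})/N-m|=\dist\big((a_k-a_{k'})/N,\Z\big)$ and $\delta_2:=\dist\big((a_k-a_{k'})M/N,\Z\big)$ (note that \eqref{E-condition-T3-1} forces $a_k\not\equiv a_{k'}\pmod N$, so $0<\delta_1\le\tfrac12$). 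Since $E$ is Hermitian with vanishing diagonal, Gershgorin's theorem gives $\|E\|\le\max_k\sum_{k'\neq k}|E_{kk'}|$, hence $M-\|E\|\le\lambda_{\min}(W^{*}W)$ and $\lambda_{\max}(W^{*}W)\le M+\|E\|$; it thus suffices to bound the $M-1$ off-diagonal entries in each row.

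The remaining estimate is elementary. Hypothesis \eqref{E-condition-T3-1} reads $\delta_2<\delta_1-u/N$, and $0\le\delta_2<\delta_1-u/N\le\delta_1\le\tfrac12$, so by monotonicity of $\sin$ on $[0,\pi/2]$,
\[ |E_{kk'}|\le\frac{\sin(\pi\delta_1-\pi u/N)}{\sin(\pi\delta_1)}=\cos\tfrac{\pi u}{N}-\cot(\pi\delta_1)\,\sin\tfrac{\pi u}{N}. \]
For $N$ even, $\delta_1\le\tfrac12$ gives $\cot(\pi\delta_1)\ge0$, hence $|E_{kk'}|\le\cos\frac{\pi u}{N}$; for $N$ odd, $\delta_1\le\frac{N-1}{2N}$ gives $\cot(\pi\delta_1)\ge\tan\frac{\pi}{2N}$, hence $|E_{kk'}|\le\frac{\cos\left(\frac{\pi}{2N}+\frac{\pi u}{N}\right)}{\cos\frac{\pi}{2N}}$. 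Summing over $k'\neq k$ and using $M-1<M$ yields $\|E\|<M\left|\cos\frac{\pi u}{N}\right|$ ($N$ even) and $\|E\|<M\,\frac{\left|\cos\left(\frac{\pi}{2N}+\frac{\pi u}{N}\right)\right|}{\cos\frac{\pi}{2N}}$ ($N$ odd); plugging these into $M-\|E\|\le A\le B\le M+\|E\|$ gives the stated Riesz constants. Finally, condition \eqref{E-u} is precisely what guarantees that the factor multiplying $M$ above is smaller than $M\sin\frac1M<1$, so $\|E\|<M$ and the lower Riesz constant is strictly positive.

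The step I expect to be the real obstacle is turning $W^{*}W$ into a usable entrywise bound. Its off-diagonal entries are Dirichlet-kernel ratios $\sin(\pi Mt)/\sin(\pi t)$ with $t=(a_k-a_{k'})/N$, and keeping them small requires comparing $\dist(a_k/N,\Z)$ with $\dist(Ma_k/N,\Z)$ — exactly the separation hypothesis \eqref{E-condition-T3-1} — while the even/odd dichotomy in the statement simply records the largest value that $\dist(\,\cdot\,,\Z)$ can take on $\tfrac1N\Z$.
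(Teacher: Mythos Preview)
Your proof is correct, and the reduction to the Gram matrix $W^{*}W$ is exactly what the paper obtains via its Theorem~\ref{T-matrix}: both approaches identify the optimal frame constants with the extreme eigenvalues of the same $M\times M$ Hermitian matrix, and both bound its off-diagonal Dirichlet-kernel entries by $C=\cos\frac{\pi u}{N}$ (resp.\ $C=\cos(\tfrac{\pi}{2N}+\tfrac{\pi u}{N})/\cos\tfrac{\pi}{2N}$) through the identical trigonometric manipulation (your expansion $\cos\frac{\pi u}{N}-\cot(\pi\delta_1)\sin\frac{\pi u}{N}$ is precisely the content of the paper's Lemma~\ref{L-(l-u)}, evaluated at the largest admissible $\delta_1$).

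Where you diverge is in the last step. The paper invokes the clustering Lemma~\ref{L-Lemma5.1} with every column in its own cluster: the angle condition $|\langle v_k,v_{k'}\rangle|\le M\sin\alpha$ together with $\tilde\sigma_j^2=M$ yields $\sigma_j^2\in[M(1-M\alpha),\,M(1+M\alpha)]$. You instead apply Gershgorin directly to $W^{*}W=MI+E$, obtaining $\lambda\in[M-(M-1)C,\,M+(M-1)C]$. Your route is more elementary and in fact slightly sharper: the paper's lemma, taken literally, gives $M(1\pm M\arcsin(C/M))$ rather than $M(1\pm C)$, and it needs the auxiliary hypothesis $C<M\sin\frac{1}{M}$ (this is what \eqref{E-u} provides) to even be applicable, whereas Gershgorin requires no such threshold. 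The paper's machinery pays off in Theorems~\ref{T-main} and~\ref{T-main-2}, where genuine multi-column clusters appear; in the present single-column situation your Gershgorin argument is the cleaner choice.
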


The problem is considerably more difficult because the set of exponentials needs to be adjusted to satisfy the assumption of Beurling's density theorem \cite{Beurling1989}.  In this paper, we consider only a model case but we plan to pursue the investigation of this problem in a subsequent work. 

\begin{Thm}\label{T-main-2}
    Let $2<N\in\N$, $N-1>m\in\N$, $I_N=[0,N)$, and $\beta$ be a solution of the equation
 $$
    \frac{\sin{(\pi (N-1) \beta)}}{\sin{(\pi \beta)}}=(N-1)\sin{\frac{1}{N-1}}.
    $$
If $\frac{1}{2(N-1)^2}<\delta< \frac{1}{N-1}-\beta$ then 
    $$
\B_{\delta} = \bigcup_{j=0}^{N-2} \left\{e^{2\pi i\left(n+\frac{j}{N-1}-j\delta\right)x}\right\}_{n\in\Z},
$$
is a basis for $L^2(I_N\setminus (m,m+1))$ with 
$$
\left(1-(N-1)\sin{\frac{1}{N-1}} \right) \left(N-1 - \frac{1}{\sin{\frac{\pi}{2(N-1)}}} \right)  \leq A  
$$
and 
$$
 B\leq \left(1+(N-1)\sin{\frac{1}{N-1}} \right) \left(N-1 + \frac{1}{\sin{\frac{\pi}{2(N-1)}}} \right).
$$
\end{Thm}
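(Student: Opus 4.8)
\emph{Approach.}
I would put $\mathcal J:=I_N\setminus(m,m+1)$; up to a null set $\mathcal J=\bigcup_{l\in L}[l,l+1)$, where $L:=\{0,1,\dots,N-1\}\setminus\{m\}$ has $N-1$ elements, and I set $\gamma:=\tfrac1{N-1}-\delta$, so that $\B_\delta=\{e^{2\pi i(n+j\gamma)x}\}_{n\in\Z,\ 0\le j\le N-2}$. The first step is the periodization reduction for unions of unit intervals with integer left endpoints that already underlies Proposition~\ref{P-basis} and Theorem~\ref{T-main}. The isometry $\Phi\colon L^2(\mathcal J)\to L^2\!\big([0,1);\C^{N-1}\big)$, $(\Phi f)(x)=\big(f(x+l)\big)_{l\in L}$, sends $e^{2\pi i(n+j\gamma)x}$ to $e^{2\pi inx}e^{2\pi ij\gamma x}u_j$, with $u_j:=\big(e^{2\pi ij\gamma l}\big)_{l\in L}\in\C^{N-1}$. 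Since $\{e^{2\pi inx}\}_{n\in\Z}$ is an orthonormal basis of $L^2[0,1)$ and $|e^{2\pi ij\gamma x}|\equiv1$, the synthesis operator of $\B_\delta$ becomes, after these identifications, the multiplication operator $h\mapsto W(\cdot)h(\cdot)$ with $W(x)$ the matrix whose $j$-th column is $e^{2\pi ij\gamma x}u_j$; and $W(x)^{*}W(x)=D(x)^{*}GD(x)$ for a unitary diagonal $D(x)$ and the Hermitian, $x$-independent Gram matrix $G_{jk}:=\sum_{l\in L}e^{2\pi i(k-j)\gamma l}$. Consequently $\B_\delta$ is a Riesz basis of $L^2(\mathcal J)$ exactly when $G\succ0$, and then its sharp Riesz bounds are $\lambda_{\min}(G)$ and $\lambda_{\max}(G)$; everything reduces to estimating the spectrum of $G$.

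Next I would make $G$ explicit. Since $l$ runs over $\{0,\dots,N-1\}$ with $m$ deleted,
\[
G_{jk}=\sum_{l\in L}e^{2\pi i(k-j)\gamma l}=S_N\big((k-j)\gamma\big)-e^{2\pi i(k-j)\gamma m},\qquad S_N(t):=\sum_{l=0}^{N-1}e^{2\pi itl},
\]
so $G=H-ww^{*}$, where $H$ is the Hermitian Toeplitz matrix $H_{jk}=S_N((k-j)\gamma)$ and $w:=\big(e^{-2\pi ijm\gamma}\big)_{j=0}^{N-2}$, $\|w\|^{2}=N-1$; equivalently $H=\sum_{l=0}^{N-1}\tilde u_l\tilde u_l^{*}$ and $w=\tilde u_m$ with $\tilde u_l:=(e^{-2\pi ijl\gamma})_{j=0}^{N-2}$, so that $G=\sum_{l\in L}\tilde u_l\tilde u_l^{*}$. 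Using $N\gamma=1+\tfrac1{N-1}-N\delta$ one rewrites, for $r\neq0$, $S_N(r\gamma)=e^{-\pi ir(N-1)\delta}\dfrac{\sin\!\big(\pi r(\tfrac1{N-1}-N\delta)\big)}{\sin\!\big(\pi r(\tfrac1{N-1}-\delta)\big)}$, and conjugating $G$ by a diagonal unitary strips these phases, leaving a real symmetric Toeplitz $H$. Finally, writing $p(z):=\sum_{j=0}^{N-2}c_jz^{j}$ for $c\in\C^{N-1}$, one has $c^{*}Gc=\sum_{l\in L}\big|p(e^{2\pi il\gamma})\big|^{2}$ and $\|c\|^{2}=\|p\|_{L^2(\T)}^{2}$, so $\lambda_{\min}(G)$ and $\lambda_{\max}(G)$ are the extreme values of the ratio $\sum_{l\in L}|p(e^{2\pi il\gamma})|^{2}/\|p\|_2^{2}$ over trigonometric polynomials of degree $\le N-2$ --- i.e.\ a Marcinkiewicz--Zygmund-type quadrature at the $N-1$ nodes $\{l\gamma\bmod1:l\in L\}$.

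The quantitative bounds are the heart of the proof. The guiding fact is that at $\gamma=\tfrac1N$ the full node family $\{l\gamma\}_{l=0}^{N-1}$ consists of the $N$-th roots of unity, the quadrature is exact ($H=NI$), and $G=NI-\tilde u_m\tilde u_m^{*}$ has spectrum $\{1,N,\dots,N\}$; moreover $\tfrac1N$ lies in the admissible window, as one checks from $N>2$ and the location of $\beta$. For $\delta$ in the prescribed range $\tfrac1{2(N-1)^2}<\delta<\tfrac1{N-1}-\beta$, equivalently $\gamma\in\big(\beta,\tfrac{2N-3}{2(N-1)^2}\big)$, the nodes stay a controlled perturbation of equally spaced ones, and I would bound two effects separately. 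First, the deviation of $H$ from $NI$, which is governed by Dirichlet-kernel estimates; this is where equation~\eqref{E-beta}, the quantity $(N-1)\sin\tfrac1{N-1}$, and Lemma~\ref{L-p_k=msN} enter, producing the factor $1\mp(N-1)\sin\tfrac1{N-1}$. Second, the cost of omitting the single node $l=m$: bounding $|p(e^{2\pi im\gamma})|^{2}$ in terms of $\sum_{l\in L}|p(e^{2\pi il\gamma})|^{2}$ by Lagrange interpolation at the remaining $N-1$ nodes, whose separation is ensured by the lower bound $\delta>\tfrac1{2(N-1)^2}$ (equivalently $(N-1)\gamma<1-\tfrac1{2(N-1)}$); this is the role of Proposition~\ref{P-inequalities} and produces the factor $N-1\mp\tfrac1{\sin(\pi/(2(N-1)))}$. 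Multiplying the two bounds gives $A\le\lambda_{\min}(G)$ and $\lambda_{\max}(G)\le B$ with $A,B$ exactly as in the statement.

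The hard part is this quantitative step, above all the lower bound $\lambda_{\min}(G)\ge A$, because the configuration is genuinely degenerate: the left endpoints of $\mathcal J$ are the integers $0,\dots,N-1$ with one removed, and these are \emph{not} distinct modulo $s=N-1$ (the endpoints $0$ and $N-1$ agree there), so at $\delta=0$ one computes $\lambda_{\min}(G)=0$ and there is no orthogonal exponential basis of spacing $\tfrac1{N-1}$ to perturb from. The entire content of the estimate is to show that the frequency shift $-j\delta$ pushes $\lambda_{\min}(G)$ away from $0$ by a definite amount, uniformly over the prescribed range of $\delta$, and that deleting one quadrature node does not bring it back. Matching the constants precisely to the $\sin$-expressions in the statement is what Lemma~\ref{L-p_k=msN} and Proposition~\ref{P-inequalities} are for; the argument parallels the proof of Theorem~\ref{T-main}, of which the present result is morally the degenerate case $s=N-1$ in which the integer parts $0$ and $N-1$ coincide modulo $s$.
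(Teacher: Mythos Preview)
Your reduction to the Gram matrix $G=\big(\sum_{l\in L}e^{2\pi i(k-j)\gamma l}\big)_{j,k}$ is correct and is exactly what Theorem~\ref{T-matrix} gives, so up to that point you and the paper agree. The divergence is in how the spectrum of $G$ is estimated. The paper does \emph{not} write $G=H-ww^{*}$ and treat the missing interval $m$ as a rank-one defect; instead it applies the clustering Lemma~\ref{L-Lemma5.1} directly to the $N-1$ columns of the Vandermonde matrix. The crucial cluster is the pair $\{0,N-1\}$, because $(N-1)\gamma=1-(N-1)\delta$ is close to $0$ in the wrap-around metric; all other nodes sit in singleton clusters. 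The factor $\big(1\pm(N-1)\sin\tfrac1{N-1}\big)$ then comes from the inter-cluster angle bound \eqref{E-alternative_to_angle_condition} via Lemmas~\ref{L-p_k=msN} and \ref{L-p_k=1}, while the factor $\big(N-1\pm 1/\sin\tfrac{\pi}{2(N-1)}\big)$ comes from the two singular values of the $2\times(N-1)$ submatrix attached to the colliding pair $\{0,N-1\}$, namely $N-1\pm\big|\sin(\pi(N-1)^2\delta)/\sin(\pi(N-1)\delta)\big|$, evaluated at the endpoint $\delta=\tfrac1{2(N-1)^2}$.

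This is where your plan has a genuine gap. You attribute the second factor to ``the cost of omitting the single node $l=m$'' via Lagrange interpolation, but $m$ is an interior node, well separated from its neighbours, and removing it is not what makes $G$ nearly singular; the near-degeneracy is entirely due to $\tilde u_0$ and $\tilde u_{N-1}$ almost coinciding. Your decomposition $G=H-\tilde u_m\tilde u_m^{*}$ hides this, because the collision $0\leftrightarrow N-1$ lives inside $H$, not in the rank-one correction. Bounding $H$ by a Gershgorin/Dirichlet-kernel argument will not give $\lambda_{\min}(H)$ close to $N$ uniformly on the stated $\delta$-range (indeed $\lambda_{\min}(H)\to 0$ as $\delta\to 0$), and a Lagrange bound for $|p(z_m)|^{2}$ in terms of $\sum_{l\in L}|p(z_l)|^{2}$ does not produce the quantity $1/\sin\tfrac{\pi}{2(N-1)}$. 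So the product form you claim does not follow from the two effects you isolate; you would need to regroup so that the pair $\{0,N-1\}$ is treated together, which is precisely the paper's clustering argument. Your final paragraph correctly identifies the $0$--$(N-1)$ collision as the obstruction, but the preceding strategy does not actually address it.
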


\medskip

The structure of this paper is as follows: In Section 2, we provide a brief overview of the definitions and tools utilized in our research. Section 3 is dedicated to proving the main results. Finally, Section 4 presents a discussion of open problems and concluding remarks.

\medskip
\noindent
{\it Acknowledgments}. First and foremost, I wish to express my deepest gratitude to my scientific advisor, Professor Laura De Carli, for her unwavering support and guidance throughout my research.

The author would also like to acknowledge the financial support provided by the Florida International University Graduate School Dissertation Fellowship.


\section{Preliminaries}

We have used the excellent textbooks \cite{Heil} and  \cite{Young} for the definitions and some of the results presented in this section.

Let  $H$   be a  separable Hilbert space with inner product $\langle\ ,\ \rangle $  and norm $||\ ||=\sqrt{\l \ , \    \r} $. We will mostly work with $L^2(D),$ where $D\subset\R^d.$ So, the norm will be $||f||_{L^2(D)}^2=\int_D |f(x)|^2 dx.$ The characteristic function on $D$ we denote as $\chi_D.$   
  
A sequence of vectors ${\mathcal V}= \{v_j\}_{j\in\Z }$  in   $H$ is a
{\it Riesz basis} if 
there exist  constants $A, \ B>0$    such that, for any $w\in H$ and  for all finite sequences $ \{a_j\}_{j\in J}\subset\C $, the following inequalities hold. 
\begin{align}\label{e2-RS}
 	A  \sum_{j\in J}   |a_j|^2   \leq  &\Big\Vert \sum_{j\in J}  a_j  v_j \Big\Vert^2  \leq B \sum_{j\in J} |a_j|^2.
\\\label{e2-frame}
 A||w||^2\leq  &\sum_{j=1}^\infty |\l  w, v_j\r |^2\leq B ||w||^2.
 \end{align}
The constants $A$ and $B$ are called {\it frame constants} of the basis. The left inequality in \eqref{e2-RS}  implies that ${\cal V}$ is linearly independent,  and the left inequality in \eqref{e2-frame} implies that  ${\cal V}$ is complete. If condition \eqref{e2-RS} holds we call ${\mathcal V}$ a Riesz sequence. We call ${\mathcal V}$ a frame if the condition \eqref{e2-frame} holds. If the condition \eqref{e2-frame} holds and $A=B$ then we call  ${\mathcal V}$ a tight frame. If $A=B=1,$ then we have a Parseval frame.
The following lemma is well-known, but for the reader's convenience, we will prove it.
\begin{Lemma}\label{lmBconstant}
If a sequence of vectors ${\mathcal V}= \{v_j\}_{j\in\Z }$ is a frame with upper constant $B$, then the right inequality in $\eqref{e2-RS}$ holds, i.e. for all finite sequences $ \{a_j\}_{j\in J}\subset\C $ 
$$
\Big\Vert \sum_{j\in J}  a_j  v_j \Big\Vert^2  \leq B \sum_{j\in J} |a_j|^2
$$
\end{Lemma}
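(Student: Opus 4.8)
The plan is to run the standard duality argument: fix a finite set $J$ and scalars $\{a_j\}_{j\in J}\subset\C$, set $f=\sum_{j\in J}a_j v_j\in H$, and estimate $\|f\|^2$ by writing it as an inner product against $f$ itself. First I would expand
$$
\|f\|^2=\langle f,f\rangle=\Big\langle \sum_{j\in J}a_j v_j,\ f\Big\rangle=\sum_{j\in J}a_j\,\langle v_j,f\rangle=\sum_{j\in J}a_j\,\overline{\langle f,v_j\rangle},
$$
using linearity of the inner product in the first slot (the sum is finite, so there is no convergence issue) and conjugate symmetry.

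Next I would apply the Cauchy--Schwarz inequality on the finite index set $J$, viewing $(a_j)_{j\in J}$ and $(\langle f,v_j\rangle)_{j\in J}$ as elements of $\ell^2(J)$:
$$
\|f\|^2\ \le\ \Big(\sum_{j\in J}|a_j|^2\Big)^{1/2}\Big(\sum_{j\in J}|\langle f,v_j\rangle|^2\Big)^{1/2}.
$$
Since $J$ is finite and all summands are nonnegative, $\sum_{j\in J}|\langle f,v_j\rangle|^2\le\sum_{j=1}^\infty|\langle f,v_j\rangle|^2$, and now the \emph{upper} frame inequality in \eqref{e2-frame}, applied to the vector $f\in H$, gives $\sum_{j=1}^\infty|\langle f,v_j\rangle|^2\le B\|f\|^2$. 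Substituting yields $\|f\|^2\le \big(\sum_{j\in J}|a_j|^2\big)^{1/2}\,(B\|f\|^2)^{1/2}=\sqrt{B}\,\|f\|\,\big(\sum_{j\in J}|a_j|^2\big)^{1/2}$.

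Finally I would conclude: if $\|f\|=0$ the desired inequality is trivial; otherwise divide both sides by $\|f\|>0$ to obtain $\|f\|\le\sqrt{B}\big(\sum_{j\in J}|a_j|^2\big)^{1/2}$, and square. There is essentially no obstacle here — the only points requiring a word of care are that the sum defining $f$ is finite (so rearrangement and linearity are unproblematic) and that we are allowed to feed $f$ itself into the frame bound, which is legitimate since $f\in H$. Note this direction uses only the upper frame bound and makes no use of completeness or of the lower bound.
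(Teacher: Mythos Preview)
Your proof is correct and follows essentially the same approach as the paper: set $f=\sum_{j\in J}a_jv_j$, expand $\|f\|^2=\sum_j a_j\langle v_j,f\rangle$, apply Cauchy--Schwarz in $\ell^2(J)$, and then invoke the upper frame bound for $f$. If anything, your version is slightly more careful in handling the case $\|f\|=0$ and in noting that only the upper frame inequality is used.
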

\begin{proof}
${\mathcal V}$ is a frame, so for all finite sequences $ \{a_j\}_{j\in J}\subset\C $ there is $f\in H$ such that $f=\sum_{j\in J}  a_j  v_j.$ So,
\begin{equation*}
    \begin{split}
        ||f||^2&=\l\sum_{j\in J}  a_j  v_j,f\r=\sum_{j\in J}  a_j\l v_j,f\r\\
        &\leq \sqrt{\sum_{j\in J}  |a_j|^2} \sqrt{\sum_{j\in J}  |\l v_j,f\r|^2}\leq \sqrt{\sum_{j\in J}  |a_j|^2} \sqrt{B}||f||.
    \end{split}
\end{equation*}
Therefore, 
$$
\Big\Vert \sum_{j\in J}  a_j  v_j \Big\Vert  \leq \sqrt{\sum_{j\in J}  |a_j|^2} \sqrt{B}.
$$
\end{proof}

The next lemma shows that frames in $L^2(D)$ can be obtained from bases of $L^2( D)$, when $D' \supset D$.
\begin{Lemma}
If  $\B$ is  basis for $L^2(D')$ and  $D\subset D'$, then $\B$ is a frame for $L^2(D)$  with at least the same frame constants. In particular, if $\B$ is an orthogonal basis for $L^2(D')$, then it is a tight frame for $L^2(D)$
\end{Lemma}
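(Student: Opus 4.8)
The plan is to transfer the frame inequalities from $L^2(D')$ to $L^2(D)$ by the zero-extension trick. First I would fix an arbitrary $f\in L^2(D)$ and let $\tilde f\in L^2(D')$ be its extension by zero outside $D$, which makes sense because $D\subset D'$. Two elementary identities are all that is needed: $\|\tilde f\|_{L^2(D')}=\|f\|_{L^2(D)}$, since $\tilde f$ vanishes on $D'\setminus D$, and, for each $v_j\in\B$,
$$
\langle \tilde f, v_j\rangle_{L^2(D')}=\int_{D'}\tilde f(x)\,\overline{v_j(x)}\,dx=\int_D f(x)\,\overline{v_j(x)}\,dx=\langle f, v_j|_D\rangle_{L^2(D)},
$$
because the integrand is supported in $D$.

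Next I would insert $\tilde f$ into the frame inequality \eqref{e2-frame} satisfied by the basis $\B$ on $L^2(D')$ with its constants $A,B$, namely $A\|\tilde f\|_{L^2(D')}^2\le\sum_j|\langle \tilde f, v_j\rangle_{L^2(D')}|^2\le B\|\tilde f\|_{L^2(D')}^2$, and substitute the two identities above to obtain
$$
A\,\|f\|_{L^2(D)}^2\le\sum_j|\langle f, v_j|_D\rangle_{L^2(D)}|^2\le B\,\|f\|_{L^2(D)}^2 .
$$
Since $f\in L^2(D)$ was arbitrary, this is exactly \eqref{e2-frame} for $\{v_j|_D\}$ on $L^2(D)$ with the same $A$ and $B$, so $\B$ is a frame for $L^2(D)$. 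The clause ``at least the same frame constants'' I would justify by observing that the optimal lower and upper frame bounds on $L^2(D)$ are the infimum and supremum of $\sum_j|\langle g,v_j\rangle|^2/\|g\|^2$ over $g$ ranging in the smaller family of zero-extensions of $L^2(D)$-functions, so the optimal lower bound can only increase and the optimal upper bound can only decrease relative to $A,B$.

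For the ``in particular'' statement I would note that if $\B=\{e^{2\pi i\lambda_j x}\}$ is an orthogonal basis of $L^2(D')$ then all of its elements have the same norm $\|e^{2\pi i\lambda_j x}\|_{L^2(D')}^2=|D'|$, so the orthogonality (Parseval) identity reads $\sum_j|\langle g,v_j\rangle_{L^2(D')}|^2=|D'|\,\|g\|_{L^2(D')}^2$ for all $g\in L^2(D')$; that is, $\B$ is a tight frame on $L^2(D')$ with $A=B=|D'|$. Applying the first part with these equal constants, and using that the optimal bounds on $L^2(D)$ always satisfy $A_{\text{opt}}\le B_{\text{opt}}$, squeezes both back to $|D'|$, so $\B$ is a tight frame for $L^2(D)$. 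I do not expect any genuine obstacle here: the only points that deserve a careful word are the identification of the norm and of the inner products under zero-extension, and, in the orthogonal case, the remark that the common value $\|v_j\|^2=|D'|$ is what makes the frame tight.
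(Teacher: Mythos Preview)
Your proposal is correct and follows essentially the same zero-extension argument as the paper's own proof, only spelled out in more detail; in particular, your explicit verification of the norm and inner-product identities and your treatment of the orthogonal case (noting that equal norms $\|v_j\|^2=|D'|$ are what yield tightness) make precise what the paper leaves implicit.
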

\begin{proof}
    Let $\B$ is a basis for $L^2(D'),$ then for all $w\in L^2(D')$
    $$
    A||w||^2\leq  \sum_{j=1}^\infty |\l  w, v_j\r |^2\leq B ||w||^2.
    $$
    Also, for any $f\in L^2(D),$ there is $w\in L^2(D')$ such that $f=w\chi_{D}.$ So, the frame inequality holds for any $f\in L^2(D).$ Therefore, $\B$ is a frame for $L^2(D)$  with at least the same frame constants.
\end{proof}

Let $\vec v \in\R^d$ and $\rho>0$; we  denote with  $d_\rho D =\{ \rho x \ : \:   x\in D\}$ and by $ t_{\vec v} D = \{x+\vec v \ : \  x\in D\}$ the dilation and translation of $D$. Sometimes we will write $\vec v+D$  instead of $t_{\vec v} D$ when there is no risk of confusion. 

The following lemma can easily be proved by applying the definitions \eqref{e2-RS} and \eqref{e2-frame}  with ${\cal V}=\{ e^{2\pi i \l x, \lambda_n\r }\}_{n\in\Z}$  and a change of variables in the integrals.
\begin{Lemma}\label{L-extended}
 Let $\vec v\in\R^d$ and $\rho>0$. 
 The set ${\cal V}=\{ e^{2\pi i \l x, \lambda_n\r }\}_{n\in\Z}$ is a Riesz basis for $L^2(D)$ with constants $A$ and $B$ if and only if  the set  $\{ e^{2\pi i \l x, \, \frac 1\rho \lambda_n \r }\}_{n\in\Z} $ is a Riesz basis for  $L^2(t_{\vec v}(d_\rho D ))$ with constants $A \rho^{ d}$ and $B \rho^{ d}$.
\end{Lemma}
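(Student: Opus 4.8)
The plan is to deduce the equivalence from two elementary sub-claims, each obtained by a change of variables in the integrals that define \eqref{e2-RS} and \eqref{e2-frame}. The first, applied to $L^2(D)$ with frequencies $\{\lambda_n\}$, says that the dilation $D\mapsto d_\rho D$ together with the rescaling $\lambda_n\mapsto\frac1\rho\lambda_n$ multiplies the Riesz constants by $\rho^d$; the second, applied to $L^2(E)$ with an arbitrary frequency set $\{\mu_n\}$, says that the translation $E\mapsto t_{\vec v}E$ leaves the frequencies and the Riesz constants unchanged. Each sub-claim is itself an ``if and only if'', so, since $t_{\vec v}(d_\rho D)$ is the dilation $d_\rho D$ followed by a translation, composing them with $E=d_\rho D$ and $\mu_n=\frac1\rho\lambda_n$ yields the lemma in both directions.

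For the dilation sub-claim I would substitute $x=\rho y$ in the middle term of \eqref{e2-RS}: since $x\in d_\rho D$ if and only if $y\in D$, $dx=\rho^d\,dy$, and $\langle\rho y,\frac1\rho\lambda_n\rangle=\langle y,\lambda_n\rangle$, the quantity $\|\sum_n a_n e^{2\pi i\langle x,\frac1\rho\lambda_n\rangle}\|_{L^2(d_\rho D)}^2$ becomes $\rho^d\|\sum_n a_n e^{2\pi i\langle y,\lambda_n\rangle}\|_{L^2(D)}^2$, which transports \eqref{e2-RS} with the constants scaled by $\rho^d$. The same substitution applied to the inner products in \eqref{e2-frame}, together with the isometric bijection $w\mapsto w(\rho\,\cdot)$ from $L^2(d_\rho D)$ onto $L^2(D)$, transports \eqref{e2-frame} with the constants again scaled by $\rho^d$ --- here the squared inner product contributes $\rho^{2d}$ and the squared norm contributes $\rho^{-d}$, netting $\rho^d$ --- and this is what shows that completeness, and not merely the Riesz-sequence property, is preserved. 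For the translation sub-claim I would write $x=y+\vec v$ for $x\in t_{\vec v}E$, so that $e^{2\pi i\langle x,\mu_n\rangle}=e^{2\pi i\langle\vec v,\mu_n\rangle}\,e^{2\pi i\langle y,\mu_n\rangle}$; absorbing the unimodular constants $e^{2\pi i\langle\vec v,\mu_n\rangle}$ into the coefficients changes neither $\sum_n|a_n|^2$ nor the relevant $L^2$ norms, which transports \eqref{e2-RS} with unchanged constants, and the parallel computation with the isometric bijection $w\mapsto w(\cdot+\vec v)$ does the same for \eqref{e2-frame}.

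I do not expect a genuine obstacle here; the argument is change-of-variables bookkeeping, which is presumably why the statement is flagged as easy. The only points that call for a little care are getting the exponent of $\rho$ right in \eqref{e2-frame} (it is $\rho^d$, not $\rho^{2d}$, because of the competing contributions of the squared inner product and the squared norm) and checking \eqref{e2-frame} in addition to \eqref{e2-RS}, so that the full Riesz-basis property --- completeness included --- is seen to survive; the unimodular phases produced by the translation are harmless for exactly this reason, since they leave both the $\ell^2$ norm of the coefficient sequence and the $L^2$ norm of every function untouched.
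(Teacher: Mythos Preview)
Your proposal is correct and follows exactly the approach the paper indicates: the paper does not spell out a proof but states that the lemma ``can easily be proved by applying the definitions \eqref{e2-RS} and \eqref{e2-frame} \ldots\ and a change of variables in the integrals,'' which is precisely the dilation-then-translation bookkeeping you carry out. One small wording slip: the map $w\mapsto w(\rho\,\cdot)$ from $L^2(d_\rho D)$ to $L^2(D)$ is a bijection but not an isometry (its norm scales by $\rho^{-d/2}$); your computation already accounts for this correctly, so the slip is purely terminological.
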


\subsection{Exponential bases and Vandermonde matrices}
One method for demonstrating the existence of an exponential basis and calculating their frame constants consists in associating a set of exponentials on the union of intervals with a matrix and showing how the properties of the matrix can be used to derive properties of the set of exponentials. Specifically, for a given $L\in\N,$ we set $I_L=[0,L).$ For $N\in\N,$ we choose $\{a_1,..., a_N\}=I_L\cap\Z.$ Next, we let
$$
\cal I_L:=\bigcup_{j=1}^N \left[a_j,a_j+1\right).
$$

The next theorem is proved in \cite{DeCarli2019}.
    \begin{Thm}\label{T-matrix}
Let $\{a_1,..., a_N\}=\left[0,L\right)\cap\Z$ and $\{\delta_1, ...,\,\delta_N\}\subset\R$.
	The set 
	$$ \B= \B(\delta_1, ...,\, \delta_N)= \bigcup_{j=1}^N  \{e^{ 2\pi i  ( n+ \delta_j) x}\}_{n\in\Z} $$
	is a Riesz basis for $L^2(\cal I_N)$  if and only if  the matrix
$\G=\{ e^{2\pi i  \delta_j a_k}\}_{1\leq j,k\leq L}$ is nonsingular.  The optimal frame constants of $\B$  are the maximum and minimum singular values of $\G$.
\end{Thm}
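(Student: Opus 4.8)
The plan is to reduce the infinite-dimensional Riesz-basis condition on $L^2(\cal I_N)$ to a finite-dimensional spectral condition on the constant matrix $\G$, via a unitary identification that exploits the integrality of the $a_k$. First I would observe that since $\{a_1,\dots,a_N\}=[0,L)\cap\Z$, the intervals $[a_k,a_k+1)$ are pairwise disjoint and tile $I_L$, so $\cal I_N$ is decomposed into unit translates. Hence the map $U\colon L^2(\cal I_N)\to L^2\big([0,1),\C^N\big)$ given by $(Uf)(t)=\big(f(t+a_1),\dots,f(t+a_N)\big)$ satisfies $\|f\|_{L^2(\cal I_N)}^2=\int_0^1\sum_{k=1}^N|f(t+a_k)|^2\,dt=\int_0^1|(Uf)(t)|^2\,dt$, i.e. $U$ is a unitary isomorphism.

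Second, I would compute $U$ on a finite combination $f=\sum_{j,n}c_{j,n}e^{2\pi i(n+\delta_j)x}$. Put $\phi_j(t)=\sum_n c_{j,n}e^{2\pi i(n+\delta_j)t}$. Because each $a_k\in\Z$ gives $e^{2\pi i(n+\delta_j)a_k}=e^{2\pi i\delta_j a_k}$, the $n$-summation collapses and the $k$-th component of $Uf$ equals $\sum_j e^{2\pi i\delta_j a_k}\,\phi_j(t)$; that is, $Uf=M\Phi$ with $\Phi=(\phi_1,\dots,\phi_N)^T$ and $M=\G^T$ acting pointwise in $t$ (note $M$ has the same singular values as $\G$). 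The structural point is that for each fixed $j$ the family $\{e^{2\pi i(n+\delta_j)t}\}_{n\in\Z}$ is a modulation of the Fourier basis, hence an orthonormal basis of $L^2([0,1))$; therefore $\|\phi_j\|_{L^2([0,1))}^2=\sum_n|c_{j,n}|^2$ and $\int_0^1|\Phi(t)|^2\,dt=\sum_{j,n}|c_{j,n}|^2$. As the coefficient arrays range over all finite (then all $\ell^2$) sequences, $\Phi$ ranges over a dense subset of $L^2([0,1),\C^N)$, and every such $\Phi$ is in fact attained by some $\ell^2$ array.

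Third, the Riesz inequalities for $\B$ become the requirement that $A\int_0^1|\Phi|^2\,dt\le\int_0^1|M\Phi|^2\,dt\le B\int_0^1|\Phi|^2\,dt$ for all $\Phi$. Writing $\int_0^1|M\Phi|^2\,dt=\int_0^1\langle M^*M\,\Phi(t),\Phi(t)\rangle\,dt$ and using the pointwise Rayleigh bounds $\lambda_{\min}(M^*M)|\Phi(t)|^2\le\langle M^*M\Phi(t),\Phi(t)\rangle\le\lambda_{\max}(M^*M)|\Phi(t)|^2$, integration shows the inequalities hold with $A=\lambda_{\min}(M^*M)=\sigma_{\min}(\G)^2$ and $B=\sigma_{\max}(\G)^2$; these are optimal since a constant $\Phi$ equal to an extremal eigenvector of $M^*M$ attains each ratio. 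Thus $\B$ is a Riesz sequence with a positive lower bound precisely when $\sigma_{\min}(\G)>0$, i.e. when $\G$ is nonsingular, and in that case the optimal constants are the extreme eigenvalues of $\G^*\G$, equivalently the squares of the extreme singular values of $\G$ (under the $\|\cdot\|^2$ normalization of the frame constants used here).

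Finally, to upgrade ``Riesz sequence'' to ``basis'' I would settle completeness. $\B$ is complete in $L^2(\cal I_N)$ iff the range $\{M\Phi\}$ is dense in $L^2([0,1),\C^N)$; when $\G$ is nonsingular $M$ is an invertible pointwise multiplier, hence a bounded bijection carrying the dense set of attainable $\Phi$ onto a dense set, so $\B$ is a Riesz basis with the stated constants. Conversely, if $\G$ is singular I would pick $0\ne w\in\ker M$ and a nonzero scalar $\psi\in L^2([0,1))$, realize $\Phi=\psi\,w$ by an $\ell^2$ array $(c_{j,n})$, and note that the resulting $f$ has $Uf=\psi\,Mw=0$ while $\sum|c_{j,n}|^2=\|\psi\|^2|w|^2\ne0$; a positive lower bound is then impossible, so $\B$ is not a Riesz basis. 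I expect the reduction in the first two steps to be the crux: the whole argument hinges on choosing $U$ so that the $n$-summation collapses---exactly where integrality of the $a_k$ enters---thereby compressing an infinite-dimensional frame problem to the finite spectral problem for $\G^*\G$. After that, optimality of the constants and both implications are routine linear algebra and density arguments.
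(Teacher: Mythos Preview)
The paper does not prove this theorem; it is quoted from \cite{DeCarli2019}. Your argument is correct and is essentially the standard proof from that reference: conjugate by the unitary $U$ that translates each $[a_k,a_k+1)$ to $[0,1)$, use $a_k\in\Z$ to collapse the $n$-sum so that $Uf=\G^T\Phi$ with $\Phi$ ranging over all of $L^2([0,1),\C^N)$, and reduce the Riesz inequalities to the pointwise Rayleigh bounds for $\G^*\G$. Your parenthetical that the optimal constants are the extreme eigenvalues of $\G^*\G$ (i.e.\ the \emph{squares} of the singular values of $\G$) is the right reading and is how the paper itself applies the result---e.g.\ Proposition~\ref{P-basisMod} obtains $A=B=s$, which is the eigenvalue of $\G^*\G$, not $\sqrt{s}$. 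One cosmetic remark: your proof only uses disjointness of the $[a_k,a_k+1)$, not that they tile $I_L$, so it already covers the setting $\{a_1,\dots,a_N\}\subset[0,L)\cap\Z$ in which the paper actually invokes the theorem.
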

The more general version of this theorem is in \cite{AsipchukDeCarliLi2024}. Theorem \ref{T-matrix} implies that estimating the Riesz constants for certain exponential bases on unions of intervals of the real line can be reduced to estimating the singular values of a corresponding matrix. This task can be challenging, especially for large matrices. Nonetheless, there are several methods to address this problem, one of which we discuss in the next subsection.

\subsection{Clusterization of matrices}
Vandermonde matrices appear naturally in connection with problems on exponential bases. 

Recall that a {\it Vandermonde matrix} $\bV$ of size $L\times N$  with elements on the complex unit circle $\T=\R/\Z$ is of the form
$$
\bV:=\bV(L,X):=\Big[ \, e^{2\pi i jx_k}\,  \Big]_{\substack{0\leq j<L\\ 1\leq k\leq N}}, 
$$	
with nodes $X=\{x_k\}_{k=1}^N \subset \R$. It is well known that $\bV$ has rank $N$ if and only if  $X$ consists of $N$ distinct elements of $\T$. See for example \cite{MaconSpitzbart1958,Turner1966}. As shown in \cite{Moitra2015} the condition number of a Vandermonde matrices, i.e. ratio between the greatest and smallest singular values, greatly depends on  the pairwise distance of the nodes  according to the {\it wrap-around metric}  
$$|t-s|_\T:=\min_{n\in \Z}|t-s-n|.$$
We say that two nodes $x_k$ and $x_{k'}$ are equivalent if $|x_k-x_{k'}|_\T=0$.

One method for estimating the singular values of Vandermonde matrices is clustering its columns, i.e. grouping the columns of the matrix into groups (clusters) with enough distances (in the "wrap-around" sense) between them. This method is described for example in \cite{BATENKOV2021,LiLiao2021} for different conditions. Rectangular matrices are more suitable for this method, although some results can also be applied to square matrices.

Let $\{L_k\}_k$ be a set of column vectors, we set
$$
\angle_{\min} (L_k,L_{k'}) = \min_{k\neq k'} \cos^{-1}{\left(\frac{|\langle L_k, L_{k'}\rangle |}{||L_k||||L_{k'}||}\right)}.
$$
The following lemma, Lemma 5.1 from \cite{BATENKOV2021}, shows how the singular values of a clustered matrix can be estimated in terms of the singular values of the clusters.

\begin{Lemma}\label{L-Lemma5.1}
    Let $A\in\C^{N \times N}$, such that $A$ is given in the following block form
    $$
    A=[A_1,...,A_M],
    $$
with $A_{k}\in \C^{N\times s_k}$ and $\sum_{k=1}^M s_k=N$. Let $L_k\subset \C^N$ be the subspace spanned by the columns of the
sub-matrix $A_k$.  Assume that for all $1\leq k,k'\leq M$, $k\neq k'$, and $0\leq \alpha \leq \frac{1}{N}$,
\begin{equation}\label{E-angle_condition}
    \angle_{\min} (L_k,L_{k'}) \geq \frac{\pi}{2} - \alpha.
\end{equation}
Let 
$$
\sigma_1\geq ... \geq \sigma_N 
$$
be the ordered collection of all the singular values of $A$, and let
$$
\tilde \sigma_1\geq ... \geq \tilde \sigma_N 
$$
be the ordered collection of all the singular values of the sub-matrices $\{A_k\}$. Then
$$
\sqrt{1-N\alpha} \tilde \sigma_j \leq \sigma_j \leq \sqrt{1+N\alpha} \tilde \sigma_j. 
$$
\end{Lemma}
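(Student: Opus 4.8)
The plan is to reduce the statement to a comparison of two Hermitian positive-semidefinite Gram matrices through a quadratic-form (Loewner) sandwich, and then to convert that sandwich into the ordered singular-value inequalities. Passing from $A$ to $G := A^*A$, whose eigenvalues are $\sigma_1^2 \ge \cdots \ge \sigma_N^2$, I would write an arbitrary vector $x \in \C^N$ in block form $x = (x_1, \dots, x_M)$ compatible with the columns of the blocks $A_k$ and expand
\[
\Big\| A x \Big\|^2 = \Big\| \sum_{k=1}^M A_k x_k \Big\|^2 = \sum_{k=1}^M \|A_k x_k\|^2 + \sum_{k \ne k'} \langle A_k x_k, A_{k'} x_{k'} \rangle .
\]
The first sum equals $x^* H x$ for the block-diagonal matrix $H := \operatorname{diag}(A_1^* A_1, \dots, A_M^* A_M)$, whose eigenvalues are exactly the squares $\tilde\sigma_1^2 \ge \cdots \ge \tilde\sigma_N^2$ of the pooled singular values of the sub-matrices (there are $\sum_k s_k = N$ of them, matching the count for $A$).

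To control the off-diagonal sum I would use that $A_k x_k \in L_k$ and $A_{k'} x_{k'} \in L_{k'}$, so hypothesis \eqref{E-angle_condition} yields the cosine bound $|\langle A_k x_k, A_{k'} x_{k'} \rangle| \le \cos(\tfrac{\pi}{2}-\alpha)\,\|A_k x_k\|\,\|A_{k'} x_{k'}\| = \sin\alpha\,\|A_k x_k\|\,\|A_{k'} x_{k'}\|$ for $k \ne k'$. Writing $y_k := \|A_k x_k\|$ and invoking the elementary inequality $\sum_{k \ne k'} y_k y_{k'} = (\sum_k y_k)^2 - \sum_k y_k^2 \le (M-1)\sum_k y_k^2$ together with $\sin\alpha \le \alpha$ and $M-1 \le N-1 < N$, I obtain
\[
(1 - N\alpha)\, x^* H x \;\le\; x^* G x \;\le\; (1 + N\alpha)\, x^* H x \qquad \text{for every } x \in \C^N,
\]
that is, the Loewner sandwich $(1-N\alpha)H \preceq G \preceq (1+N\alpha)H$.

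The final and most delicate step, which I expect to be the main obstacle, is to turn this quadratic-form sandwich into the ordered eigenvalue inequalities $(1-N\alpha)\tilde\sigma_j^2 \le \sigma_j^2 \le (1+N\alpha)\tilde\sigma_j^2$, after which taking square roots gives the assertion. The difficulty is that $G$ and $H$ need not commute, so the sandwich does not diagonalize simultaneously; moreover $H$ may be singular when some block $A_k$ is rank-deficient. I would handle the latter by noting that, since $1-N\alpha \ge 0$, the sandwich forces $\ker G = \ker H$, so the zero singular values match in number on both sides and it suffices to work on the common range $V = (\ker H)^\perp$, where $H$ is positive definite. On $V$ the matrix $P := H^{-1/2} G H^{-1/2}$ has spectrum in $[1-N\alpha, 1+N\alpha]$, and since $G = H^{1/2} P H^{1/2}$ has the same nonzero eigenvalues as $P^{1/2} H P^{1/2}$, to which Ostrowski's theorem on Hermitian congruence applies, I get $\lambda_j(G) = \theta_j\,\lambda_j(H)$ with each $\theta_j \in [\,\lambda_{\min}(P), \lambda_{\max}(P)\,] \subseteq [1-N\alpha, 1+N\alpha]$. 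This is precisely the required eigenvalue sandwich; the remaining care is the boundary case $N\alpha = 1$, where the lower bound is vacuous and only the upper estimate is asserted.
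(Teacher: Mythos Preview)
The paper does not prove this lemma; it is quoted verbatim as Lemma~5.1 of \cite{BATENKOV2021}, so there is no in-paper argument to compare against. Your proposal is therefore being judged on its own merits.

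Your approach is sound. Writing $G=A^{*}A$ and $H=\mathrm{diag}(A_k^{*}A_k)$ and bounding the cross terms via the angle hypothesis is exactly the right reduction. The chain $|\langle A_kx_k,A_{k'}x_{k'}\rangle|\le \sin\alpha\,\|A_kx_k\|\,\|A_{k'}x_{k'}\|$, followed by $\sum_{k\neq k'}y_ky_{k'}\le (M-1)\sum_k y_k^2$ and $(M-1)\sin\alpha\le N\alpha$, cleanly yields the Loewner sandwich $(1-N\alpha)H\preceq G\preceq (1+N\alpha)H$.

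One simplification: the final step does not require Ostrowski's congruence theorem. Once you have $G\preceq cH$ (respectively $G\succeq cH$) for a scalar $c\ge 0$, the min--max characterization of eigenvalues of Hermitian matrices immediately gives $\lambda_j(G)\le c\,\lambda_j(H)$ (respectively $\lambda_j(G)\ge c\,\lambda_j(H)$) for every $j$, since the Rayleigh quotients satisfy the same inequality on every subspace. This also makes the rank-deficiency discussion unnecessary for the upper bound, and for the lower bound the case $N\alpha=1$ is vacuous as you note. Your Ostrowski route is correct but heavier than needed.
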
   

Note that condition \eqref{E-angle_condition} can be  modified as
\begin{equation}\label{E-alternative_to_angle_condition}
    \max_{k\neq k'} \langle L_k,L_{k'} \rangle \leq N \sin{\alpha}
\end{equation}
for $||L_k||^2_2=N$.

Lemma \ref{L-Lemma5.1} is quite general and applicable to any matrix. However, for matrices associated to exponential bases, the term $\langle L_k, L_{k'} \rangle$ involves a sum of exponents that is difficult to control. The structure of Vandermonde matrices allows us to express this sum in a more manageable form, as demonstrated in the following lemma.

\begin{Lemma}\label{L-sum_of exponents}
    Let $\{a_1,...,a_m\}\subset \R$, $\delta \in \R$, and $L_k=[1,e^{2 \pi i a_k \delta},...,e^{2 \pi i a_k (m-1) \delta}]^T$, where $T$ denotes the transpose.  Then for all $k\neq k'$ and $\delta(a_k-a_{k'})\not\in\Z$
        \begin{equation}\label{E-sin_ratio}
        |\langle L_k,L_{k'}\rangle | = \left\lvert \frac{\sin{(\pi m (a_k-a_{k'}) \delta)}}{\sin{(\pi(a_k-a_{k'}) \delta)}} \right\rvert.
    \end{equation}
    \begin{proof}
        For all $k\neq k'$ and $\delta(a_k-a_{k'})\not\in\Z$ we use the formula for the sum of a geometric series to write
        \begin{align*}
            |\langle L_k,L_{k'}\rangle | &= |\sum_{j=0}^{m-1} e^{2\pi i (a_k-a_{k'})\delta j} |= \left\lvert \frac{1-e^{2 \pi i m (a_k-a_{k'})\delta}}{1-e^{2 \pi i (a_k-a_{k'})\delta}} \right\rvert \\
            &= \left\lvert \frac{\sin{(\pi m (a_k-a_{k'}) \delta)}}{\sin{(\pi(a_k-a_{k'}) \delta)}} \right\rvert.
        \end{align*}
    \end{proof}
\end{Lemma}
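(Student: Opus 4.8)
The plan is to recognize $\langle L_k, L_{k'}\rangle$ as a finite geometric sum (a Dirichlet-kernel-type expression) and to evaluate it in closed form.

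First I would write the Hermitian inner product entrywise. The $j$-th coordinate of $L_k$ (for $j=0,\dots,m-1$) is $e^{2\pi i a_k j\delta}$, so
\[
\langle L_k, L_{k'}\rangle \;=\; \sum_{j=0}^{m-1} e^{2\pi i a_k j\delta}\,\overline{e^{2\pi i a_{k'} j\delta}} \;=\; \sum_{j=0}^{m-1} e^{2\pi i (a_k-a_{k'})\delta j} \;=\; \sum_{j=0}^{m-1} r^{\,j},
\]
where $r:=e^{2\pi i (a_k-a_{k'})\delta}$. Next, the hypothesis $\delta(a_k-a_{k'})\notin\Z$ guarantees $r\neq 1$, so the finite geometric series sums to $\frac{1-r^m}{1-r}=\frac{1-e^{2\pi i m(a_k-a_{k'})\delta}}{1-e^{2\pi i (a_k-a_{k'})\delta}}$. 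Passing to absolute values with the elementary identity $|1-e^{i\theta}|=2|\sin(\theta/2)|$ (which follows from $1-e^{i\theta}=-2ie^{i\theta/2}\sin(\theta/2)$), applied with $\theta=2\pi m(a_k-a_{k'})\delta$ in the numerator and $\theta=2\pi(a_k-a_{k'})\delta$ in the denominator, the two factors of $2$ cancel and one arrives at
\[
|\langle L_k, L_{k'}\rangle| \;=\; \left\lvert \frac{\sin(\pi m(a_k-a_{k'})\delta)}{\sin(\pi(a_k-a_{k'})\delta)} \right\rvert,
\]
which is the asserted identity.

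There is essentially no obstacle here. The only points deserving a word of care are that $r\neq 1$ — so that both the geometric-series formula applies and the denominator $\sin(\pi(a_k-a_{k'})\delta)$ on the right-hand side is nonzero — and these are exactly what the excluded condition $\delta(a_k-a_{k'})\in\Z$ rules out; and that we compute only the modulus $|\langle L_k, L_{k'}\rangle|$, so taking absolute values at the final step loses nothing. For completeness one might add the remark that in the excluded case $\delta(a_k-a_{k'})\in\Z$ every summand equals $1$, hence $\langle L_k,L_{k'}\rangle=m$, which matches the limiting value $\lim_{\theta\to 0}\frac{\sin(m\theta)}{\sin\theta}$ of the right-hand side, consistent with (though not needed for) the statement.
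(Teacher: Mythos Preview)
Your proof is correct and follows essentially the same route as the paper: write $\langle L_k,L_{k'}\rangle$ as the geometric sum $\sum_{j=0}^{m-1} e^{2\pi i (a_k-a_{k'})\delta j}$, use $\delta(a_k-a_{k'})\notin\Z$ to ensure the ratio is nonsingular, and convert $|1-e^{i\theta}|$ to $2|\sin(\theta/2)|$. Your version is slightly more explicit about the sine identity and adds a harmless remark on the excluded case, but the argument is the same.
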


The ratio \eqref{E-sin_ratio} has several properties that will simplify the process of placing nodes into different clusters. 

For our problem,  we consider matrices $\G=\{ e^{2\pi i  \delta (j-1) a_k}\}_{1\leq j,k\leq L}$. These are  Vandermonde matrices with nodes $\{a_k\}_{1\leq k\leq L}$. We distribute these nodes in the following way: we put in one cluster all nodes for which the pairwise ratio \eqref{E-sin_ratio} is large, and we make sure that the pairwise ratio of any two points from different clusters is small.  In order to do this, we need to carefully estimate the ratio in \eqref{E-sin_ratio}, which we will do in the next subsection. 

\subsection{Three useful lemmas}\label{S-UL}
In this subsection, we state and prove several technical lemmas that are instrumental for proving the main results. Each lemma explores the properties of the ratio between two sine functions with different arguments. 
 
\begin{Lemma}\label{L-p_k=msN}
    For every $M\in \N\setminus\{1\}$, the function $g_M(t):=\frac{\sin{(\pi M t)}}{\sin{(\pi t)}}$ is decreasing, non negative, and $g_{M}(t) < M$ on the interval $\left(0,\frac{1}{M}\right)$. Moreover, there is only one $\beta\in \left(0,\frac{1}{M}\right)$ that solve equation 
    $$
    \frac{\sin{(\pi M \beta)}}{\sin{(\pi \beta)}}=M\sin{\frac{1}{M}}.
    $$
    
\end{Lemma}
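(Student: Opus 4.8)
The plan is to analyze $g_M(t) = \frac{\sin(\pi M t)}{\sin(\pi t)}$ on $(0, 1/M)$ directly. First I would observe that on this interval both $\sin(\pi M t)$ and $\sin(\pi t)$ are strictly positive (since $0 < \pi t < \pi M t < \pi$), so $g_M$ is positive and smooth there, with no issues at the endpoints in the limiting sense ($g_M(0^+) = M$, $g_M((1/M)^-) = 0$). The value bound $g_M(t) < M$ would follow once monotonicity is established, since $\lim_{t\to 0^+} g_M(t) = M$ by L'Hôpital or the standard $\frac{\sin x}{x}$ limit. So the crux is showing $g_M$ is strictly decreasing.

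For monotonicity I would compute the logarithmic derivative: $\frac{g_M'(t)}{g_M(t)} = \pi M \cot(\pi M t) - \pi \cot(\pi t)$, so it suffices to show $M\cot(\pi M t) < \cot(\pi t)$ on $(0, 1/M)$. Writing $h(x) = x \cot x$, this is the statement $h(\pi M t) < h(\pi t)$; since $\pi t < \pi M t$ and both arguments lie in $(0,\pi)$, it would follow from showing $h(x) = x\cot x$ is strictly decreasing on $(0, \pi)$. That in turn reduces to $h'(x) = \cot x - x\csc^2 x = \frac{\sin x \cos x - x}{\sin^2 x} < 0$, i.e. $\sin x \cos x < x$, i.e. $\tfrac12 \sin(2x) < x$ on $(0,\pi)$ — which is clear for $x \in (0, \pi/2]$ from $\sin y < y$ and for $x \in [\pi/2, \pi)$ since the left side is $\le \tfrac12 < \pi/2 \le x$. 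This chain is the main technical content; I expect the small obstacle to be handling it cleanly across the whole interval $(0,\pi)$ rather than just near $0$, but the two-case split above dispatches it.

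For the last claim — existence and uniqueness of $\beta \in (0, 1/M)$ solving $g_M(\beta) = M \sin\frac{1}{M}$ — I would argue by the intermediate value theorem together with the strict monotonicity just proved. The right-hand side satisfies $0 < M\sin\frac{1}{M} < M$ (using $0 < \sin y < y$ with $y = 1/M \in (0,1]$, noting $M \ge 2$), and since $g_M$ is continuous and strictly decreasing on $(0,1/M)$ with $g_M(0^+) = M$ and $g_M((1/M)^-) = 0$, the value $M\sin\frac1M$ is attained exactly once. This completes the proof.
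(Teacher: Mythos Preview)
Your argument is correct but follows a genuinely different route from the paper's. The paper proceeds by induction on $M$: the base case $g_2(t)=2\cos(\pi t)$ is handled directly, and the inductive step uses the angle-addition identity $g_{M+1}(t)=g_M(t)\cos(\pi t)+\cos(\pi M t)$ to propagate the bound $g_{M+1}<M+1$ (and, implicitly, the monotonicity, since each summand is decreasing on $(0,1/(M+1))$). You instead treat all $M\ge 2$ uniformly via the logarithmic derivative, reducing the monotonicity of $g_M$ to the single fact that $x\cot x$ is strictly decreasing on $(0,\pi)$. Your approach is more direct and the monotonicity step is fully spelled out, whereas the paper leaves that part of the inductive step to the reader; on the other hand, the paper's recursion avoids differentiating a ratio and stays at the level of basic trig identities. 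Both arguments finish the existence--uniqueness of $\beta$ the same way, via the intermediate value theorem together with the strict monotonicity and the endpoint values $g_M(0^+)=M$, $g_M((1/M)^-)=0$. One minor remark: your two-case split for $\tfrac12\sin(2x)<x$ is unnecessary, since $\sin y<y$ for all $y>0$ already gives it on $(0,\pi)$ in one stroke.
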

\begin{proof}
  We use induction. First, if $M=2$, we have that   
  $$
  g_2(t)=\frac{\sin{(2\pi  t)}}{\sin{(\pi t)}} = \frac{2\sin{(\pi  t)}\cos{(\pi  t)}}{\sin{(\pi t)}} = 2\cos{(\pi  t)}.
  $$
  So, $g_2(t)$ is decreasing, non negative, and $g(t) < 2$ on the interval $\left(0,\frac{1}{2}\right)$. Next, we assume that the conclusions of the lemma hold for $M>2$ and using  the trigonometric identities $\sin(a+b)=\sin a \cos b+\sin b \cos a$, we can write
 \begin{align*}
       g_{M+1}(t)&=\frac{\sin{(\pi (M+1)t)}}{\sin{(\pi t)}}\\&=\frac{\sin{(\pi M t)}\cos{(\pi t)}}{\sin{(\pi t)}}+\cos{(\pi M t)}\\
       &=g_{M}(t) \cos{(\pi t)}+\cos{(\pi M t)}.
 \end{align*}
 By assumption, $g_M(t) < M$, and so $g_{M+1}(t) < M \cos(\pi t)+\cos(\pi Mt) \leq M+1$.

 By assumption, $g_M(t)$ is decreasing, $g_M\left(\frac{1}{M}\right)=0$, and $g_M\left(t\right)\rightarrow M$, when $t \rightarrow 0$, so the equation 
  $$
    \frac{\sin{(\pi M t)}}{\sin{(\pi t)}}=M\sin{\frac{1}{M}}.
    $$
    has only one solution on the interval $\left(0,\frac{1}{M}\right)$. 
\end{proof}

\begin{Lemma}\label{L-p_k=1}
  For any $M\in \N \setminus \{1\}$ and $t\leq\frac{1}{2 M^2}$ we have that 
    \begin{equation}\label{E-p_k=1}
        \left\lvert \frac{\sin{(\pi M t)}}{\sin{(\pi(\frac{1}{M} -t))}} \right\rvert< M \sin{\frac{1}{M}}.
    \end{equation}
\end{Lemma}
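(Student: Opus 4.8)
The plan is to estimate the numerator and denominator of the left-hand side of \eqref{E-p_k=1} separately on the range $0 < t \le \frac{1}{2M^2}$, exploiting that on this tiny interval all the relevant sine arguments are small and positive, so every sine in sight is nonnegative and we may drop the absolute value. For the numerator, since $0 < \pi M t \le \frac{\pi}{2M} \le \frac{\pi}{4} < \pi$ we have $0 \le \sin(\pi M t)$, and using the monotonicity of $\sin$ on $[0,\pi/2]$ together with concavity (or simply the elementary bound $\sin x \le x$) gives $\sin(\pi M t) \le \sin\!\left(\frac{\pi}{2M}\right)$ when $t$ is at its largest; more usefully, I want to bound $\sin(\pi M t)$ by something proportional to $\sin\frac{1}{M}$. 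For the denominator, note $\frac{1}{M} - t \ge \frac{1}{M} - \frac{1}{2M^2} = \frac{1}{M}\left(1 - \frac{1}{2M}\right) \ge \frac{3}{4M}$, and $\frac{1}{M} - t < \frac{1}{M} \le \frac{1}{2}$, so $\pi(\frac{1}{M}-t) \in (0, \frac{\pi}{2})$ and $\sin\!\left(\pi(\frac{1}{M}-t)\right) > 0$; I will lower-bound it by $\sin\!\left(\pi(\frac{1}{M} - \frac{1}{2M^2})\right)$ using monotonicity, since shrinking $t$ only makes the argument larger but still below $\pi/2$ — wait, I must be careful: increasing $t$ decreases $\frac1M - t$, hence decreases the denominator, so the worst case (smallest denominator) is $t = \frac{1}{2M^2}$.

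So the inequality to establish reduces to the single worst-case estimate
\begin{equation*}
    \frac{\sin\!\left(\frac{\pi}{2M}\right)}{\sin\!\left(\pi\!\left(\frac{1}{M} - \frac{1}{2M^2}\right)\right)} < M \sin\frac{1}{M},
\end{equation*}
after checking that the left-hand side of \eqref{E-p_k=1}, as a function of $t$, is increasing on $(0, \frac{1}{2M^2}]$ (numerator increasing, positive denominator decreasing). Actually it is cleaner to bound numerator and denominator independently: $\sin(\pi M t) \le \pi M t \le \frac{\pi}{2M}$ is too lossy against $M\sin\frac1M \approx 1$, so instead I would keep $\sin(\pi M t) \le \sin\frac{\pi}{2M}$ and then compare $\sin\frac{\pi}{2M}$ with $M \sin\frac1M \sin\!\left(\pi(\frac1M - \frac1{2M^2})\right)$. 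Both sides here are of the same order $\sim \frac{\pi}{2M}$ versus $M \cdot \frac1M \cdot \frac{3\pi}{4M}\cdot(\text{const})$, so the comparison should go through; the clean way is to write $\frac{1}{M} - \frac{1}{2M^2} = \frac{1}{2M} + \frac{1}{2M}\left(1 - \frac1M\right)$ and use $\sin(a+b) \ge \sin a$ for $a,b \ge 0$, $a+b \le \pi/2$, giving $\sin\!\left(\pi(\frac1M - \frac1{2M^2})\right) \ge \sin\frac{\pi}{2M}$, which already kills the numerator factor and leaves us needing only $1 < M\sin\frac1M \cdot \frac{\text{something}}{\text{something}}$ — but $M\sin\frac1M < 1$, so this crude step is exactly wrong by a hair and I will need the sharper control.

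The main obstacle, then, is that $M\sin\frac1M < 1$ while the ratio on the left is itself close to $1$, so the estimate is genuinely tight and I cannot afford lossy bounds; I expect to need either the second-order Taylor expansion $\sin x = x - \frac{x^3}{6} + O(x^5)$ applied to all three sines to extract the leading cancellation, or a monotonicity argument in $M$ (induction on $M$, base case $M=2$ checked by hand using $\sin(2\theta) = 2\sin\theta\cos\theta$ to get an explicit expression, then showing the inequality only improves as $M$ grows because both sides $\to 1$ but the right side approaches from a controlled direction). I would try the induction route first, since it mirrors the proof of Lemma \ref{L-p_k=msN} and keeps everything elementary; failing that, the Taylor-with-explicit-remainder computation on $0 < t \le \frac{1}{2M^2}$, where $x \le \frac{\pi}{2M} \le \frac{\pi}{4}$ makes the remainder terms easy to dominate, will finish it.
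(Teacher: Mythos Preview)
Your reduction is sound: on $(0, \tfrac{1}{2M^2}]$ the numerator is increasing and the positive denominator decreasing, so it suffices to check the endpoint $t = \tfrac{1}{2M^2}$, and you are right that the target $M\sin\tfrac{1}{M} < 1$ makes the estimate delicate. But the plan stalls at the decisive step, and your preferred first route --- induction on $M$ --- is unlikely to go anywhere. In Lemma~\ref{L-p_k=msN} the recursion $g_{M+1}(t) = g_M(t)\cos(\pi t) + \cos(\pi Mt)$ works because the denominator $\sin(\pi t)$ is fixed; here both $\sin(\pi Mt)$ and $\sin\bigl(\pi(\tfrac{1}{M}-t)\bigr)$ change with $M$, and the admissible range $t \le \tfrac{1}{2M^2}$ shifts too, so the inductive hypothesis lives on a different interval from the one you need. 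The Taylor route would eventually succeed but with more bookkeeping than the problem warrants.

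The paper takes a different and more direct path. It applies the two-sided bound $\tfrac{2}{\pi}x \le \sin x \le x$ (Jordan's inequality, valid on $[0,\tfrac{\pi}{2}]$) to numerator and denominator to obtain the rational upper bound
\[
\frac{\sin(\pi Mt)}{\sin\bigl(\pi(\tfrac{1}{M}-t)\bigr)} \;\le\; \frac{\pi Mt}{2(\tfrac{1}{M}-t)} \;=\; \frac{\pi M^2 t}{2(1-Mt)},
\]
solves $\tfrac{\pi M^2 t}{2(1-Mt)} < M\sin\tfrac{1}{M}$ explicitly for $t$, and then checks that the resulting threshold $\tfrac{2\sin(1/M)}{M(\pi + 2\sin(1/M))}$ exceeds $\tfrac{1}{2M^2}$ (this last step uses that $M \mapsto M\sin\tfrac{1}{M}$ is increasing). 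So your diagnosis that ``crude bounds fail'' is slightly too pessimistic: the Jordan lower bound $\sin x \ge \tfrac{2}{\pi}x$ on the denominator is precisely the lossy-but-sufficient step you dismissed without trying.
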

\begin{proof}
    By Lemma \ref{L-p_k=msN} for $0<t< \frac{1}{M}$ we have that 
    $$
     \left\lvert \frac{\sin{(\pi M t)}}{\sin{(\pi(\frac{1}{M} -t))}} \right\rvert=\frac{\sin{(\pi M t)}}{\sin{(\pi(\frac{1}{M} -t))}} = \frac{\sin{(\pi M (\frac{1}{M} -t))}}{\sin{(\pi(\frac{1}{M} -t))}}>0
    $$
    and the function $t\rightarrow \frac{\sin{(\pi M t)}}{\sin{(\pi(\frac{1}{M} -t))}}$ is increasing. Using the inequality $\frac{2}{\pi} |x| \leq \sin{|x|}\leq |x|$, we obtain 
    \begin{align*}
        \frac{\sin{(\pi M t)}}{\sin{(\pi(\frac{1}{M} -t))}} \leq \frac{\pi M t}{2(\frac{1}{M} -t)} = \frac{\pi M^2 t}{2(1 -M t)} 
    \end{align*}
    for $0<t< \frac{\pi}{2}-\frac{1}{M}$. Since the inequality
    \begin{equation*}
        \frac{\pi M^2 t}{2(1 -M t)} < M \sin{\frac{1}{M}}
    \end{equation*}
   is satisfied when $0<t< \frac{2 \sin{\frac{1}{M}}}{M(\pi + 2 \sin{\frac{1}{M}})}$. 
       
    We show that 
    \begin{equation}\label{E-inequalityforM}
    \frac{2 \sin{\frac{1}{M}}}{M(\pi + 2 \sin{\frac{1}{M}})} > \frac{1}{2 M^2},
\end{equation}  
whenever $M\geq 2$. For $M=2$, direct calculation shows \eqref{E-inequalityforM}.  For $M>2$, we start with the simple estimation
    $$
    \frac{2 \sin{\frac{1}{M}}}{M(\pi + 2 \sin{\frac{1}{M}})} \geq \frac{2 \sin{\frac{1}{M}}}{M(\pi +  \frac{2}{M})} = \frac{M \sin{\frac{1}{M}}}{M(\frac{M\pi}{2} +  1)}.
    $$
 Next, we show that $v(M):= M \sin {\frac{1}{M}}$ is increasing.  Using the Taylor series of $sin t$ and $cos t$, we can  estimate $v'(M)$ as follows
 \begin{align*}
     v'(M) = \sin {\frac{1}{M}} - \frac{\cos {\frac{1}{M}}}{M} \geq \frac{1}{3M^3}-\frac{1}{30M^5}>0.
 \end{align*}
 for all $M > 2$, so $v(M)$ is increasing. Thus,
 $$
 \frac{M \sin{\frac{1}{M}}}{M(\frac{M\pi}{2} +  1)}= \frac{v(M)}{M(\frac{M\pi}{2} +  1)}\geq  \frac{3 \sin{\frac{1}{3}}}{M(\frac{M\pi}{2} +  1)}\geq \frac{3 \sin{\frac{1}{3}}}{M^2(\frac{\pi}{2} +  \frac{1}{3})}.
 $$
 Finally, because 
 $$
 \frac{3 \sin{\frac{1}{3}}}{\frac{\pi}{2} +  \frac{1}{3}}> \frac{1}{2},
 $$
 we have 
 $$
 \frac{2 \sin{\frac{1}{M}}}{M(\pi + 2 \sin{\frac{1}{M}})} > \frac{1}{2 M^2},
 $$
 for $M\geq 2$. 
    Therefore, by \eqref{E-inequalityforM} the inequality \eqref{E-p_k=1} holds for $0<t\leq \frac{1}{2 M^2}$. 
\end{proof}

\begin{Prop}\label{P-inequalities}
    Let $M\geq 2$ and  $\beta\in \left(0,\frac{1}{M}\right)$ be a solution of the equation 
    $$
    \frac{\sin{(\pi M \beta)}}{\sin{(\pi \beta)}}=M\sin{\frac{1}{M}}.
    $$
    Then $\frac{1}{2M^2}<\frac{1}{M}-\beta$.
\end{Prop}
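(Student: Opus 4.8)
The plan is to convert the claimed inequality into a single pointwise comparison for the function $g_M(t)=\frac{\sin(\pi M t)}{\sin(\pi t)}$ and then read that comparison off from the two preceding lemmas. Put $c:=\frac1M-\frac1{2M^2}$; since $M\ge 2$ we have $c\in\left(0,\frac1M\right)$, and by Lemma \ref{L-p_k=msN} the function $g_M$ is decreasing on $\left(0,\frac1M\right)$ with $g_M(\beta)=M\sin\frac1M$. Consequently, to obtain $\frac1{2M^2}<\frac1M-\beta$, i.e. $\beta<c$, it is enough to show $g_M(c)<M\sin\frac1M$: were $\beta\ge c$ instead, monotonicity would give $M\sin\frac1M=g_M(\beta)\le g_M(c)$, contradicting that strict inequality.

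The second step is to evaluate $g_M(c)$. Since $\pi M\left(\frac1M-\frac1{2M^2}\right)=\pi-\frac{\pi}{2M}$, the reflection identity $\sin(\pi-x)=\sin x$ gives
\[
g_M(c)=\frac{\sin\!\left(\pi-\tfrac{\pi}{2M}\right)}{\sin\!\left(\pi\!\left(\tfrac1M-\tfrac1{2M^2}\right)\right)}=\frac{\sin\!\left(\tfrac{\pi}{2M}\right)}{\sin\!\left(\pi\!\left(\tfrac1M-\tfrac1{2M^2}\right)\right)}.
\]
Writing $t=\frac1{2M^2}$ so that $\frac{\pi}{2M}=\pi M t$, this is exactly $\left\lvert\frac{\sin(\pi M t)}{\sin\!\left(\pi(\tfrac1M-t)\right)}\right\rvert$, which Lemma \ref{L-p_k=1} bounds strictly above by $M\sin\frac1M$ at the endpoint $t=\frac1{2M^2}$ of its range. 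Hence $g_M(c)<M\sin\frac1M$, and the previous paragraph finishes the argument.

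I do not anticipate a real obstacle: the proposition is essentially a repackaging of Lemma \ref{L-p_k=1} via the monotonicity of $g_M$. The only points needing care are the direction of the inequality when passing through the decreasing function $g_M$, and checking that $c=\frac1M-\frac1{2M^2}$ lies in $\left(0,\frac1M\right)$ so that Lemmas \ref{L-p_k=msN} and \ref{L-p_k=1} apply. (Should one wish to avoid citing Lemma \ref{L-p_k=1}, the same bound follows from the elementary estimate $g_M(c)=\frac{\sin(\pi M t)}{\sin(\pi(\tfrac1M-t))}\le\frac{\pi M^2 t}{2(1-Mt)}$ at $t=\frac1{2M^2}$ together with the inequality $\frac{2\sin\frac1M}{M(\pi+2\sin\frac1M)}>\frac1{2M^2}$ already proved in Lemma \ref{L-p_k=1}; but the direct citation is cleaner.)
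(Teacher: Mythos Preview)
Your proof is correct and follows essentially the same approach as the paper. Both arguments use the monotonicity of $g_M$ from Lemma \ref{L-p_k=msN} together with the bound from Lemma \ref{L-p_k=1} at $t=\tfrac{1}{2M^2}$; the paper phrases the comparison in terms of the reflected function $t\mapsto \frac{\sin(\pi M t)}{\sin(\pi(\tfrac1M-t))}$ and the point $\tfrac1M-\beta$, whereas you work directly with $g_M$ at $c=\tfrac1M-\tfrac1{2M^2}$, which is the same computation under the substitution $t\leftrightarrow\tfrac1M-t$.
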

\begin{proof}
    Let $\beta\in \left(0,\frac{1}{M}\right)$ be a solution of 
    $$
    \frac{\sin{(\pi M \beta)}}{\sin{(\pi \beta)}}=M\sin{\frac{1}{M}}.
    $$
    Then $\frac{1}{M}-\beta$ solves 
$$
\left\lvert \frac{\sin{(\pi M t)}}{\sin{(\pi(\frac{1}{M} -t))}} \right\rvert= M \sin{\frac{1}{M}}.
$$
Using Lemma \ref{L-p_k=msN} and simple change of variables, $t:=\frac{1}{M}-t$, we can show that $\left\lvert\frac{\sin{(\pi M t)}}{\sin{(\pi(\frac{1}{M} -t))}} \right\rvert$ is decreasing on the interval $\left(0,\frac{1}{M}\right)$.

By Lemma \ref{L-p_k=1} we have that for $t=\frac{1}{2M^2}$ 
$$
\left\lvert \frac{\sin{(\pi M t)}}{\sin{(\pi(\frac{1}{M} -t))}} \right\rvert> M \sin{\frac{1}{M}}.
$$
These three facts prove that $\frac{1}{2M^2}<\frac{1}{M}-\beta$.
\end{proof}

The final lemma will be used to prove Theorem \ref{T-main-3}.
\begin{Lemma}\label{L-(l-u)}
    Let $N\in N \setminus \{1\}$. The function 
    $$
    \omega_N(t)=\frac{\sin{\frac{\pi (t-u)}{N}}}{\sin{\frac{\pi t}{N}}}
    $$
    is increasing on the interval $0<u<t<\frac{N}{2}$. 
\end{Lemma}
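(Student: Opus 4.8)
The plan is to differentiate $\omega_N$ directly and show that the derivative is positive on the stated interval. First I would introduce the shorthand $x=\frac{\pi t}{N}$ and $c=\frac{\pi u}{N}$, so that the hypothesis $0<u<t<\frac{N}{2}$ becomes $0<c<x<\frac{\pi}{2}$, and $\omega_N$ takes the form $x\mapsto \frac{\sin(x-c)}{\sin x}$. Since $t\mapsto x$ is an increasing affine change of variables, it suffices to prove that $\frac{\sin(x-c)}{\sin x}$ is increasing in $x$ on $\left(c,\frac{\pi}{2}\right)$; note also that $0<x<\frac{\pi}{2}$ forces $\sin x>0$, so the denominator never vanishes and the function is smooth on this interval.

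Next I would apply the quotient rule to obtain
\[
\frac{d}{dx}\,\frac{\sin(x-c)}{\sin x}=\frac{\cos(x-c)\sin x-\sin(x-c)\cos x}{\sin^2 x}.
\]
The key step is to recognize the numerator, via the angle-subtraction identity $\sin A\cos B-\cos A\sin B=\sin(A-B)$ with $A=x$ and $B=x-c$, as $\sin\big(x-(x-c)\big)=\sin c$. Hence the derivative equals $\frac{\sin c}{\sin^2 x}$.

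Finally, on $\left(c,\frac{\pi}{2}\right)$ we have $0<c<\frac{\pi}{2}$, so $\sin c>0$, and $\sin^2 x>0$, whence the derivative is strictly positive. Translating back through the substitution, $\frac{d}{dt}\omega_N(t)=\frac{\pi}{N}\cdot\frac{\sin(\pi u/N)}{\sin^2(\pi t/N)}>0$ for $0<u<t<\frac{N}{2}$, which proves that $\omega_N$ is increasing there. There is no genuine obstacle in this argument; the only point requiring a moment's care is confirming that $\sin\frac{\pi t}{N}\neq 0$ on the interval, which is immediate from $0<\frac{\pi t}{N}<\frac{\pi}{2}$.
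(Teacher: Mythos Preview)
Your proof is correct and essentially identical to the paper's: both compute $\omega_N'(t)=\frac{\pi}{N}\,\frac{\sin(\pi u/N)}{\sin^2(\pi t/N)}>0$ on the given interval. The only cosmetic difference is that the paper first rewrites $\omega_N(t)=\cos\frac{\pi u}{N}-\cot\frac{\pi t}{N}\sin\frac{\pi u}{N}$ before differentiating, whereas you apply the quotient rule directly and then simplify via the sine subtraction formula.
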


\begin{proof}
    We rewrite our function using simple trigonometric identity
    $$
     \omega_N(t) = \cos{\frac{\pi u}{N}} - \cot{\frac{\pi t}{N}}\sin{\frac{\pi u}{N}}.
    $$
   Since
     $$
     \omega_N'(t) = \frac{\pi}{N}\csc^2{\frac{\pi t}{N}}\sin{\frac{\pi u}{N}}>0,
    $$
    when $0<u<\frac{N}{2}$, $\omega_N(t)$ is an increasing function on the interval $0<u<t<\frac{N}{2}$.
\end{proof}


\section{Proof of the main results}

In this section, we prove Theorems \ref{T-main}, \ref{T-main-3} and \ref{T-main-2}. The proofs are primarily built upon Theorem \ref{T-matrix} and Lemma \ref{L-Lemma5.1}, and have the following key components:
\begin{enumerate}
    \item Associating a set of exponentials on a given domain with a Vandermonde matrix using Theorem \ref{T-matrix};
    \item Distributing the columns of the  matrix among different clusters using lemmas from Subsection \ref{S-UL};
    \item Estimating the singular values of the matrix using Lemma \ref{L-Lemma5.1}. 
\end{enumerate}

\subsection{Proof of Theorem \ref{T-main}}

We start with a union of intervals
$$
\cal I_{s,\E}=\bigcup_{k=0}^{s-1} [a_k+\epsilon_k,a_k+\epsilon_k+1),
$$
where $\E=\{\epsilon_k\}_{k=0}^{s-1}=\left\{\frac{p_k}{q_k}\right\}_{k=0}^{s-1}\subset \Q$ and the $a_k$ (mod $s$) are distinct. Letting $N$ the least common multiple of  the denominators of $\{q_0,...,q_{s-1}\}$, we extend $I_{\E}$ to obtain a union of intervals with integer endpoints
$$
\cal I_{sN,N\E}=\bigcup_{k=0}^{s-1} [(a_k+\epsilon_k)N,(a_k+\epsilon_k+1)N).
$$
By Lemma \ref{L-extended} $\B=\bigcup_{j=0}^{s-1}\left\{e^{2 \pi i \left(n+j\left(\frac{1}{s} +N\delta\right)\right) x} \right\}$ is a basis for $L^2(I_{\epsilon})$ if and only if $\tilde \B=\bigcup_{j=0}^{sN-1}\left\{ e^{2 \pi i \left(n+j\left[\frac{1}{s N} +\delta\right]\right) x} \right\}$ is a basis for $L^2(I_{sN,N\E})$. To show that $\tilde \B$ is a basis for $L^2(I_{sN,N\E})$ we will use Theorem \ref{T-matrix} and Lemma \ref{L-Lemma5.1}.

Indeed, $\cal I_{sN,N\E}$ has the nodes 
$$
p_k\in \{0,...,N-1,(a_1+\epsilon_1)N,...,(a_1+\epsilon_1)N-1,...,(a_{s-1}+\epsilon_{s-1})N,...,(a_{s-1}+\epsilon_{s-1})N-1\}
$$
for $k=0,...,sN-1$. We "wrap around" these nodes to the interval $[0,1)$ by multiplying by $\frac{1}{s N} +\delta$, i.e $p'_{k}=p_k\left(\frac{1}{s N} +\delta\right)$. 
Next, we group the $p_k$ into clusters consisting of either one or two points. To achieve this, we analyze the proximity of each $p_k$ to other nodes using the wrap-around distance as the measure of closeness. Let's fix $k$. If for all $k\neq k'$ the ratio \eqref{E-sin_ratio} is less than $s N \sin{\frac{1}{s N}}$, then $p_k$ is far enough from other points and we can put it in the cluster of size $1$. Similarly, if we fix distinct $k$ and $k'$ such that the ratio \eqref{E-sin_ratio} greater or equal than $s N \sin{\frac{1}{s N}}$  and if for all $k''$ not equal $k$ and $k'$ the ratio \eqref{E-sin_ratio} less than $s N \sin{\frac{1}{s N}}$, then $p_k$ and $p_{k'}$ are far enough from other points and we can put them in the cluster of size $2$. This approach can be used to form clusters of any size, however, the condition $|\epsilon_l| < \frac12$ guarantees that we have clusters at most length $2$. Let's explain why this is the case.  

Let’s examine the ratio in equation \eqref{E-sin_ratio} more closely. In our context $\delta$ is $\frac{1}{sN} +\delta$, so we have
$$
\left\lvert \frac{\sin{(\pi sN (p_k-p_{k'}) (\frac{1}{sN} +\delta)}}{\sin{(\pi (p_k-p_{k'})(\frac{1}{sN} +\delta))}} \right\rvert.
$$
Note that $p_{k}-p_{k'}\in \Z$. So, for small $\delta$ the closest nodes will be those for which there is $m_{k,k'}\in \Z$ such that $p_k-p_{k'}=m_{k,k'} s N$. In this case 
$$
\left\lvert \frac{\sin{(\pi sN (p_k-p_{k'}) (\frac{1}{sN} +\delta)}}{\sin{(\pi (p_k-p_{k'})(\frac{1}{sN} +\delta))}} \right\rvert = \left\lvert \frac{\sin{(\pi m_{k,k'} s^2 N^2 \delta)}}{\sin{(\pi m_{k,k'} s N \delta)}} \right\rvert > s N \sin{\frac{1}{s N}}
$$
for small enough $\delta$. It means that $p_{k}$ and $p_{k'}$ will be in one cluster if there is $m_{k,k'}\in \Z$ such that $p_k-p_{k'}=m_{k,k'} s N$. Therefore, we need to avoid situations when there is a $k''$ for which we can express $p_k-p_{k''}=m_{k,k''} s N$ and $p_{k'}-p_{k''}=m_{k,k''} s N$, with $m_{k,k'},m_{k',k''}\in\Z$.  

According to the condition of the Theorem \ref{T-main} the $a_l(\text{mod } s)$ are distinct and wrap around in $[0,1)$ perfectly (without intersections) when $\delta = 0$ and all $\epsilon_l=0$. However, when we introduce perturbations to our intervals, the situation changes significantly. Those changes produce nodes that will be close to each other in a wrap-around sense, i.e. $p_k-p_{k'}=m_{k,k'} s N$. Therefore, we need to avoid scenarios where, after perturbation, intervals that were not "neighbors" in the wrap-around sense before the perturbation end up having nodes that are close to each other.

Without loss of generality, we consider $a_l=a_{l'}+2 (\text{mod } s)$. Since $I:=[(a_l+\epsilon_l)N,(a_l+\epsilon_l+1)N)$ and $I':=[(a_{l'}+\epsilon_{l'})N,(a_{l'}+\epsilon_{l'}+1)N)$, the intervals $I$ and $I'$contain the nodes $$\{(a_l+\epsilon_l)N, (a_l+\epsilon_l)N+1,...,(a_l+\epsilon_l+1)N-1\}$$ and $$\{(a_{l'}+\epsilon_{l'})N, (a_{l'}+\epsilon_{l'})N+1,...,(a_{l'}+\epsilon_{l'}+1)N-1\}$$ 
respectively. We need to show that there is no $m\in\Z$ such that 
$$
(a_l+\epsilon_l)N -(a_{l'}+\epsilon_{l'}+1)N+1 = m sN.
$$
It means that 
\begin{align*}
    \frac{a_l-a_{l'}-1}{s}+\frac{\epsilon_l-\epsilon_{l'}}{s}+\frac{1}{sN}&\not\in\Z
\end{align*}
and
\begin{align*}
   \frac{(\epsilon_l-\epsilon_{l'}+1)N+1}{sN}&\not\in\Z.
\end{align*}
So, if $\epsilon>0$ and $|\epsilon_l|\leq \epsilon$ and $|\epsilon_{l'}|\leq \epsilon$, it is enough to find the smallest $\epsilon$ such that $\epsilon_{l'}=\epsilon$, $\epsilon_{l}=-\epsilon$, and 
$$
 \frac{(-2\epsilon+1)N+1}{sN}>0.
$$
This is true for $\epsilon<\frac{N+1}{2N}$ or $\epsilon \leq \frac{1}{2}$. Therefore, if $|\epsilon_l|\leq \frac{1}{2}$, for $l=1,...,s-1$, then all nodes $p_k$ can be sorted in the clusters with the maximal length $2$.

Now, we continue with the main proof. Using Lemma \ref{L-Lemma5.1} we represent our main matrix 
$$\G=\left\{e^{2 \pi i p_k \left(\frac{1}{sN}+\delta\right)(j-1)}\right\}_{1\leq k,j\leq sN}$$ 
as
$$
\G=[\G_1,...,\G_{s'},\G'_1,...,\G'_{s'}],
$$
where $\G_m$ are  $1\times sN$ matrices and $\G_{m'}$ are  $2\times sN$ matrices. All $\G_m$ are column vectors, so they have the singular value equal to the square norm of the vector, i.e. 
$$
\sigma_m^2= ||\G_m||^2=sN.
$$
Note that according to our clusterization all $\G'_{m'}$ are associated to two nodes $p_k$ and $p_{k'}$ with the property $\frac{p_k-p_{k'}}{sN}\in\Z$, so 
$$
\G'_{m'}= \begin{pmatrix}
1 &  1\\
e^{2\pi i p_k (\frac{1}{sN}+\delta)} &  e^{2\pi i p_{k'} (\frac{1}{sN}+\delta)}\\
\vdots & \vdots\\
e^{2\pi i p_k (\frac{1}{sN}+\delta) (sN-1)} &  e^{2\pi i p_{k'} (\frac{1}{sN}+\delta) (sN-1)}
\end{pmatrix}.
$$
So,  the singular values of ${\G'}_{m'}$ are the eigenvalues of the matrix 
$$
{\G'}_{m'}^{*} \G'_{m'} = \begin{pmatrix}
    sN & \sum_{j=0}^{sN-1} e^{2\pi i (p_k-p_{k'})(\frac{1}{sN}+\delta)j }  \\
   \sum_{j=0}^{sN-1} e^{-2\pi i (p_k-p_{k'})(\frac{1}{sN}+\delta)j } & s N 
\end{pmatrix}
$$
which can be easily evaluated: Letting $b=\sum_{j=0}^{sN-1} e^{2\pi i (p_k-p_{k'})(\frac{1}{sN}+\delta)j } =\frac{1-e^{2\pi i (p_k-p_{k'})(\frac{1}{sN}+\delta)sN }}{1-e^{2\pi i (p_k-p_{k'})(\frac{1}{sN}+\delta) }}$, then the eigenvalues satisfy the following equation 
$$
\lambda^2 - 2 s N \lambda + s^2 N^2 - b^2=0.
$$
Then 
\begin{equation}\label{E-sigmaP}
    {\sigma'}_{1,2}^2=\lambda_{1,2}=sN \pm |b| = sN \pm \left\lvert \frac{\sin{(\pi sN (p_k-p_{k'}) (\frac{1}{sN} +\delta)}}{\sin{(\pi (p_k-p_{k'})(\frac{1}{sN} +\delta))}} \right\rvert.
\end{equation}
By Lemma \ref{L-p_k=msN} we know that the second term on the right-hand side of \eqref{E-sigmaP} is an increasing function of $(p_k-p_{k'}) \delta$. Therefore, this term reaches its maximum value when $|p_k-p_{k'}|$ is at as small as  possible value, i.e. $|p_k-p_{k'}|=1$. Thus, the minimal and maximal singular values of the sub-matrices can be estimated as 
$$
 sN- \frac{\sin{(\pi s^2 N^2 |\delta|)}}{\sin{(\pi s N |\delta|})}\leq {\sigma'}_{sN}^2\text{ and }{\sigma'}_{1}^2\leq sN+\frac{\sin{(\pi s^2 N^2 |\delta|)}}{\sin{(\pi s N |\delta|})}.
$$
Note that according to the condition \eqref{E-delta}
$$\frac{1}{2s^2N^3 m } \leq |\delta|\leq \frac{1}{sN^2 m}-\frac{\beta}{N m},$$ where we recall that $\beta$ is the only one solution of the equation
 $$
    \frac{\sin{(\pi sN \beta)}}{\sin{(\pi \beta)}}=sN\sin{\frac{1}{sN}}
    $$
    and $m:=a_{s-1}+\epsilon_{s-1}$.
    So, we can simplify our estimations to 
    $$
 sN- \frac{\sin{\left(\frac{\pi}{2 N m} \right)}}{\sin{\left(\frac{\pi}{2 sN^2 m} \right)}}\leq {\sigma'}_{sN}^2\text{ and }{\sigma'}_{1}^2\leq sN+\frac{\sin{\left(\frac{\pi}{2 N m} \right)}}{\sin{\left(\frac{\pi}{2 sN^2 m} \right)}}.
$$
Taking into account the fact that for all $m_{k,k'}$
$$m_{k,k'} s N +1\leq N(M_{s-1}+\epsilon_{s-1})=N m$$
and Lemma \ref{L-p_k=1} we can show that for  $$\frac{1}{2s^2N^3 m } \leq |\delta|\leq \frac{1}{sN^2 m}-\frac{\beta}{N m}$$ there is $\alpha<\frac{1}{sN}$ such that the maximal angle between sub-matrices is greater or equal than $\frac{\pi}{2} - \alpha$, i.e.
$$
\angle_{\min} (L_k,L_{k'}) \geq \frac{\pi}{2} - \alpha,
$$
where 
\begin{equation}\label{E-alpha}
    \alpha= \frac{1}{sN}\frac{\sin{(\pi s N^2 |\delta|m )}}{\sin{(\pi (\frac{1}{s N}-|\delta|Nm))}}.
\end{equation}
Using Lemma \ref{L-Lemma5.1} we obtain that 
$$
(1-sN\alpha) \left(sN-  \frac{\sin{\left(\frac{\pi}{2 N m} \right)}}{\sin{\left(\frac{\pi}{2 sN^2 m} \right)}}\right)  \leq \sigma^2_{sN}  \text{ and }     \sigma^2_{1}\leq (1+sN\alpha) \left(sN+  \frac{\sin{\left(\frac{\pi}{2 N m} \right)}}{\sin{\left(\frac{\pi}{2 sN^2 m} \right)}}\right). 
$$

Using Lemma \ref{L-p_k=1} and the substitution $t:=\frac{1}{sN} -|\delta|Nm$ we can show that $\alpha$ is a decreasing function of $|\delta|$, using the  bounds for delta  \eqref{E-delta} in \eqref{E-alpha}  we obtain
$$
\alpha\leq \frac{1}{sN}\frac{\sin{(\pi s N^2 (\frac{1}{sN^2 m}-\frac{\beta}{N m}) m )}}{\sin{(\pi (\frac{1}{s N}-(\frac{1}{sN^2 m}-\frac{\beta}{N m}) Nm))}} = \frac{1}{sN}\frac{\sin{(\pi sN \beta)}}{\sin{(\pi \beta)}}=\sin{\frac{1}{sN}}
$$

By Theorem \ref{T-matrix} $\tilde \B$ is a basis for $L^2(I_{sN,N\E})$ and the Riesz constants can be estimated as follows:
$$
\left(1-sN\sin{\frac{1}{sN}}\right) \left(sN-  \frac{\sin{\left(\frac{\pi}{2 N m} \right)}}{\sin{\left(\frac{\pi}{2 sN^2 m} \right)}}\right)  \leq A   
$$
and
$$
   B\leq \left(1-sN\sin{\frac{1}{sN}}\right)  \left(sN-  \frac{\sin{\left(\frac{\pi}{2 N m} \right)}}{\sin{\left(\frac{\pi}{2 sN^2 m} \right)}}\right).
$$
By Lemma \ref{L-extended} $\B$ is a basis for $L^2(I_{s,\E})$ with Riesz constants $A'$ and $B'$, that satisfy 
$$
\frac{1}{N}\left(1-sN\sin{\frac{1}{sN}}\right) \left(sN-  \frac{\sin{\left(\frac{\pi}{2 N m} \right)}}{\sin{\left(\frac{\pi}{2 sN^2 m} \right)}}\right)  \leq A'   
$$
and
$$
   B'\leq \frac{1}{N}\left(1-sN\sin{\frac{1}{sN}}\right)  \left(sN-  \frac{\sin{\left(\frac{\pi}{2 N m} \right)}}{\sin{\left(\frac{\pi}{2 sN^2 m} \right)}}\right).
$$
\begin{FlushRight}
$\square$
\end{FlushRight}
\subsection{Proof of Theorem \ref{T-main-3}}
We start with the interval $I_N=[0,N)$. The set
$$
\B_N = \bigcup_{j=1}^{N} \left\{e^{2\pi i \left(n+\frac{j-1}{N}\right)x}\right\}_{n\in\Z}
$$
is an orthogonal basis for $L^2(I_N)$. From $I_N$ we remove a union of intervals
$$
I_M=\bigcup_{k=1}^M [a_k,a_k+1)
$$
with $a_k$ as in \eqref{E-condition-T3-1}. We show that 
$$
\B_M = \bigcup_{j=1}^{M} \left\{e^{2\pi i \left(n+\frac{j-1}{N}\right)x}\right\}_{n\in\Z}
$$
is a basis for $L^2(I_M)$ and we estimate its Riesz constants using Lemma \ref{L-Lemma5.1}. 

To do so, we associate our set of exponents $\B_M$ on the set of intervals with the matrix 
$$
\G_M=\left\{e^{2\pi i a_k \frac{j-1}{N} }\right\}_{1\leq j,k\leq M}
$$
as in Theorem \ref{T-matrix}. This is a Vandermonde matrix with distinct nodes $\frac{a_k}{N}$. Therefore, it is non-singular and by Theorem \ref{T-matrix}, $\B_M$ is a basis for $L^2(I_M)$. 

The conditions on $a_k$ outlined in the theorem align with those in Lemma \ref{L-Lemma5.1}, and we employ the approach used in the proof of Theorem \ref{T-main}, with only minor adjustments to estimate the frame constants of $\B_M$. 

We evaluate the inner product between two different column vectors $v_k$ and $v_{k'}$ of the matrix $\G$, obtaining  
\begin{align*}
    |\langle v_k, v_{k'}\rangle| = \left|\sum_{j=1}^{M} e^{2\pi i (a_k - a_{k'}) \frac{j-1}{N}}\right| = \left| \frac{1-e^{2 \pi i (a_k - a_{k'}) \frac{M}{N}}}{1-e^{2 \pi i (a_k - a_{k'}) \frac{1}{N}}}\right| = \left| \frac{\sin{\left(\pi (a_k - a_{k'}) \frac{M}{N}\right)}}{\sin{\left(\pi (a_k - a_{k'}) \frac{1}{N}\right)}}  \right|.
\end{align*}

The condition \eqref{E-condition-T3-1} guarantees that for all distinct $a_k$ and $a_{k'}$ and for some positive integer $l$ that satisfy the inequality $0<u<l\leq \frac{N}{2}$, the following inequality holds.
$$
\left| \frac{\sin{\left(\pi (a_k - a_{k'}) \frac{M}{N}\right)}}{\sin{\left(\pi (a_k - a_{k'}) \frac{1}{N}\right)}}   \right| \leq  \frac{\sin{\left(\pi \frac{l-u}{N}\right)}}{\sin{\left(\pi \frac{l}{N}\right)}}.
$$
Next, we use Lemma \ref{L-(l-u)} to show that this ratio is an increasing function of $l$, so 
$$
\frac{\sin{\left(\pi \frac{l-u}{N}\right)}}{\sin{\left(\pi \frac{l}{N}\right)}} \leq \frac{\sin{\left(\pi \frac{\frac{N}{2}-\left\{\frac{N}{2}\right\}-u}{N}\right)}}{\sin{\left(\pi \frac{\frac{N}{2}-\left\{\frac{N}{2}\right\}}{N}\right)}},
$$
where $\left\{\frac{N}{2}\right\}$ is a decimal part of $\frac{N}{2}$. Thus, equation \eqref{E-u} guarantees that the following inequality holds.
$$
|\langle v_k, v_{k'}\rangle| < M \sin{\frac{1}{M}}. 
$$
This implies that all distinct columns of the matrix $\G_M$ are sufficiently well-separated, allowing us to apply Lemma \ref{L-Lemma5.1} to estimate the singular values of the matrix and the frame constants of the basis. Therefore, $\B_M$ is a basis for $L^2(\cal I_M)$ with Riesz's constants
\begin{equation*}
    \begin{cases} 
    M\left(1-\left\lvert \cos{\frac{\pi u}{N}} \right\rvert \right) \leq A \text{ and } B \leq   M\left(1+\left\lvert \cos{\frac{\pi u}{N}} \right\rvert \right) \,  & N \text{ is even}\\
   M\left(1-\left\lvert \frac{\cos{\left(\frac{\pi}{2N}+\frac{\pi u}{N}\right)}}{\cos{\frac{\pi}{2N}}} \right\rvert\right)  \leq A \text{ and } B \leq   M\left(1+\left\lvert \frac{\cos{\left(\frac{\pi}{2N}+\frac{\pi u}{N}\right)}}{\cos{\frac{\pi}{2N}}} \right\rvert \right) \, & N \text{ is odd}
    \end{cases}.
\end{equation*}
\begin{FlushRight}
$\square$
\end{FlushRight}
 
\medskip

Theorem \ref{T-main-3} addresses the case where the nodes are distributed across $M$ clusters, with each cluster containing exactly one node. Nevertheless, we can extend the approach outlined in the proof of Theorem \ref{T-main} to handle the case of clusters containing at most two nodes each.

\begin{Cor}
Let $N,M\in\N$ such that $2<M\leq\frac{N}{2}$. Also, let $u\in\N$ satisfies \eqref{E-u}. Let $a_k$ can be distributed between clusters with at most $2$ points in each of them. If all distinct $a_k$ and $a_{k'}$ from different clusters satisfy \eqref{E-condition-T3-1}, then $\B_{M}$ is a basis for $L^2(\cal I_M)$ with 
\begin{equation*}
    \begin{cases} 
    ( M - \alpha)\left(1-\left\lvert \cos{\frac{\pi u}{N}} \right\rvert\right)  \leq A \text{ and } B \leq   (M + \alpha)\left(1+\left\lvert \cos{\frac{\pi u}{N}} \right\rvert \right) \,  & N \text{ is even}\\
   ( M - \alpha)\left(1-\left\lvert \frac{\cos{\left(\frac{\pi}{2N}+\frac{\pi u}{N}\right)}}{\cos{\frac{\pi}{2N}}} \right\rvert\right)  \leq A \text{ and } B \leq   (M + \alpha)\left(1+\left\lvert \frac{\cos{\left(\frac{\pi}{2N}+\frac{\pi u}{N}\right)}}{\cos{\frac{\pi}{2N}}} \right\rvert \right)\, & N \text{ is odd}
    \end{cases},
\end{equation*}
where 
$$
\alpha:= \max_{a_k\neq a_{k'}}\left| \frac{\sin{\left(\pi (a_k - a_{k'}) \frac{M}{N}\right)}}{\sin{\left(\pi (a_k - a_{k'}) \frac{1}{N}\right)}}  \right|.
$$
\end{Cor}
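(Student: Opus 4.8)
The plan is to rerun the proof of Theorem~\ref{T-main-3}, inserting the two-column block computation from the proof of Theorem~\ref{T-main} at each cluster of size two. By Theorem~\ref{T-matrix}, $\B_M$ corresponds to the $M\times M$ Vandermonde matrix $\G_M=\{e^{2\pi i a_k(j-1)/N}\}_{1\le j,k\le M}$, which is nonsingular because the $a_k/N$ are distinct points of $[0,1)$; hence $\B_M$ is already a basis, and the whole task reduces to bounding the extreme singular values of $\G_M$. I would order the columns $v_1,\dots,v_M$ of $\G_M$ according to the prescribed clustering, write $\G_M=[\G_1,\dots,\G_p]$ with each $\G_r$ of size $M\times s_r$, $s_r\in\{1,2\}$, and let $L_r\subset\C^M$ be the span of the columns of $\G_r$.

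The first step is to compute the singular values of the blocks. A block of size one is a single column with squared norm $M$, so its only squared singular value is $M$. For a block of size two built from two nodes $a_k,a_{k'}$ of the same cluster, $\G_r^{*}\G_r$ has diagonal entries $M$ and off-diagonal entry of modulus $|b|$, where $b=\sum_{j=1}^{M}e^{2\pi i(a_k-a_{k'})(j-1)/N}$; by the geometric-series identity of Lemma~\ref{L-sum_of exponents},
$$
|b|=\Bigl|\tfrac{\sin(\pi(a_k-a_{k'})M/N)}{\sin(\pi(a_k-a_{k'})/N)}\Bigr|\le\alpha,
$$
so the two squared singular values of this block are $M\pm|b|$. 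Consequently every squared singular value $\tilde\sigma_j^{2}$ arising from the blocks lies in $[\,M-\alpha,\ M+\alpha\,]$, and $M-\alpha>0$ because such a ratio of sines is a sum of $M$ unit complex numbers that are not all equal.

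The second step is the separation between distinct clusters. For $a_k,a_{k'}$ in different clusters, hypothesis~\eqref{E-condition-T3-1}, Lemma~\ref{L-(l-u)}, and the choice~\eqref{E-u} of $u$ give, exactly as in the proof of Theorem~\ref{T-main-3},
$$
\frac{|\langle v_k,v_{k'}\rangle|}{\|v_k\|\,\|v_{k'}\|}=\frac1M\Bigl|\tfrac{\sin(\pi(a_k-a_{k'})M/N)}{\sin(\pi(a_k-a_{k'})/N)}\Bigr|<\sin\tfrac1M,
$$
where the ratio on the right is controlled by $\bigl|\cos\tfrac{\pi u}{N}\bigr|$ when $N$ is even and by $\bigl|\tfrac{\cos(\tfrac{\pi}{2N}+\tfrac{\pi u}{N})}{\cos\tfrac{\pi}{2N}}\bigr|$ when $N$ is odd. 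Because every cluster has width at most two, this pointwise bound on the cross-cluster cosines forces $\angle_{\min}(L_r,L_{r'})\ge\tfrac\pi2-\alpha_0$ for an admissible $\alpha_0\le\tfrac1M$, exactly as in the proof of Theorem~\ref{T-main}. Feeding $\tilde\sigma_j^{2}\in[M-\alpha,M+\alpha]$ and the parameter $\alpha_0$ into Lemma~\ref{L-Lemma5.1} with ambient dimension $M$ then produces the Riesz bounds recorded in the statement: the intra-cluster data supply the factors $M\mp\alpha$ and the cross-cluster data the factors $1\pm\bigl|\cos\tfrac{\pi u}{N}\bigr|$ (respectively the odd analogue), and Theorem~\ref{T-matrix} identifies the resulting singular values with the Riesz constants of $\B_M$.

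The genuine obstacle is the passage, in the second step, from the pairwise column estimate to a lower bound on $\angle_{\min}$ between the cluster \emph{subspaces}: Lemma~\ref{L-Lemma5.1} is stated in terms of the principal angles between the spans $L_r$, which are now possibly two-dimensional, whereas \eqref{E-condition-T3-1} only bounds the cosine between individual columns from different clusters. One has to verify that, when every cluster has at most two points and all cross-cluster column cosines are below $\sin\tfrac1M$, the principal angles between the corresponding spans are still at least $\tfrac\pi2-\alpha_0$ with $\alpha_0\le\tfrac1M$, so that Lemma~\ref{L-Lemma5.1} applies with no extra loss; this is precisely the bookkeeping performed in the proof of Theorem~\ref{T-main}, and it is what lets the intra-cluster contribution $\alpha$ and the inter-cluster contribution $1\pm\bigl|\cos\tfrac{\pi u}{N}\bigr|$ combine multiplicatively.
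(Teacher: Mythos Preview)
Your proposal is correct and follows the same approach as the paper's proof, which is structurally identical but terser: block the columns of $\G_M$ according to the given clustering, read off the squared singular values $M$ and $M\pm|b|\in[M-\alpha,M+\alpha]$ of the one- and two-column blocks, reuse the Theorem~\ref{T-main-3} argument with \eqref{E-condition-T3-1} and \eqref{E-u} to bound cross-cluster inner products by $M\sin\tfrac{1}{M}$, and then apply Lemma~\ref{L-Lemma5.1} together with Theorem~\ref{T-matrix}. The subtlety you flag at the end---passing from columnwise cosines to principal angles between possibly two-dimensional subspaces---is also left implicit in the paper, which throughout applies Lemma~\ref{L-Lemma5.1} via the columnwise reformulation~\eqref{E-alternative_to_angle_condition}; so your argument matches the paper's level of rigor on this point.
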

\begin{proof}
   Using the proof of Theorem \ref{T-main-3} we show that any two clusters are sufficiently well-separated to apply Lemma \ref{L-Lemma5.1}. Next, we calculate the smallest and largest singular values of the $2\times M$ matrices corresponding to clusters. Since there are finitely many nodes, there are distinct $a_k$ and $a_{k'}$ for which 
$$
\alpha:= \max_{a_k\neq a_{k'}}\left| \frac{\sin{\left(\pi (a_k - a_{k'}) \frac{M}{N}\right)}}{\sin{\left(\pi (a_k - a_{k'}) \frac{1}{N}\right)}}  \right| = \left| \frac{\sin{\left(\pi (a_k - a_{k'}) \frac{M}{N}\right)}}{\sin{\left(\pi (a_k - a_{k'}) \frac{1}{N}\right)}}  \right|.
$$
Such $a_k$ and $a_{k'}$ produce a matrix that yields the largest and smallest singular values. A direct calculation yields 
$$
\sigma_{min}' =  M - \alpha   \text{  and  } \sigma_{max}' =  M + \alpha.
$$
Therefore, by Lemma \ref{L-Lemma5.1} and Theorem \ref{T-matrix} $\B_{M}$ is a basis for $L^2(\cal I_M)$ with frame constants 
\begin{equation*}
    \begin{cases} 
    ( M - \alpha)\left(1-\left\lvert \cos{\frac{\pi u}{N}} \right\rvert\right)  \leq A \text{ and } B \leq   (M + \alpha)\left(1+\left\lvert \cos{\frac{\pi u}{N}} \right\rvert \right) \,  & N \text{ is even}\\
   ( M - \alpha)\left(1-\left\lvert \frac{\cos{\left(\frac{\pi}{2N}+\frac{\pi u}{N}\right)}}{\cos{\frac{\pi}{2N}}} \right\rvert\right)  \leq A \text{ and } B \leq   (M + \alpha)\left(1+\left\lvert \frac{\cos{\left(\frac{\pi}{2N}+\frac{\pi u}{N}\right)}}{\cos{\frac{\pi}{2N}}} \right\rvert \right)\, & N \text{ is odd}
    \end{cases}.
\end{equation*}
\end{proof}

\subsection{Proof of Theorem \ref{T-main-2}}
We start with the interval $I_N=[0,N)$.  Recall that  the set 
$$
\B = \bigcup_{j=0}^{N-1} \left\{e^{2\pi i \left(n+\frac{j}{N}\right)x}\right\}_{n\in\Z}
$$
is an orthogonal basis for $L^2(I_N)$. From $I_N$ we remove the interval $I_{m}=(m,m+1)$, where $m<N-1$ and $m\in\N$. We find a basis for $L^2(I_N\setminus I_{m})$ in the form 
$$
\B_{\delta} = \bigcup_{j=0}^{N-2} \left\{e^{2\pi i\left(n+\frac{j}{N-1}-j\delta\right)x}\right\}_{n\in\Z},
$$
where $|\delta|$ is sufficiently small. This is equivalent to showing that the matrix
$$
\G_{\delta} = \left\{e^{2\pi i k\left(\frac{j}{N-1}-j\delta\right)}\right\}_{0\leq j,k \leq N-1}
$$
is not singular. 

First, we observe that for small $\delta$ we can form $N-1$ clusters. The first $N-2$ of them will contain only one point from $1$ to $N-2$, and the last cluster will include the points $0$ and $N-1$. Note that the distance between any two points from different clusters is $\frac{1}{N-1}-\delta$ in a wrap-around sens. 

Let $\beta$ be the solution of the equation
 $$
    \frac{\sin{(\pi (N-1) \beta)}}{\sin{(\pi \beta)}}=(N-1)\sin{\frac{1}{N-1}}
    $$     
    on the interval $\left(0,\frac{1}{N-1}\right)$. Using Lemma \ref{L-p_k=msN} and Lemma \ref{L-p_k=1} we get that for $\frac{1}{2(N-1)^2}<\delta< \frac{1}{N-1}-\beta$ and for any $k\neq k'$ such that $|p_k-p_{k'}|<N-1$  
\begin{align*}
    |\langle L_k,L_{k'} \rangle| &= \left|\frac{\sin{(\pi (p_k-p_{k'})(N-1)\delta)}}{\sin{\left(\pi (p_k-p_{k'})\left(\frac{1}{N-1}-\delta\right)\right)}}\right| \leq \left|\frac{\sin{(\pi (N-1)\delta)}}{\sin{\left(\pi \left(\frac{1}{N-1}-\delta\right)\right)}}\right| \\ 
    &< (N-1)\sin{\frac{1}{N-1}}. 
\end{align*}
Next, we evaluate the singular values of the submatrices of $\G$ formed by the clusters. Matrices formed by one-node clusters, only have one singular value equal to $N-1$. The singular values of the $2\times N-1$ matrices formed by two-nodes clusters can be evaluated as in the proof of Theorem \ref{T-main}. We obtain
$$
{\sigma'}_{1,2}^2=\lambda_{1,2}=N-1 \pm \left\lvert \frac{\sin{(\pi (N-1)^2\delta)}}{\sin{(\pi (N-1)\delta)}} \right\rvert.
$$
Thus, by Lemma \ref{L-Lemma5.1}, $\B_{\delta}$ is a basis for $L^2(I_N\setminus I_{m})$ with frame constants
$$
\left(1-\left|\frac{\sin{(\pi (N-1)\delta)}}{\sin{\left(\pi \left(\frac{1}{N-1}-\delta\right)\right)}}\right| \right) \left(N-1 - \frac{\sin{(\pi (N-1)^2\delta)}}{\sin{(\pi (N-1)\delta)}} \right)  \leq A  
$$
and
$$
 B\leq \left(1+\left|\frac{\sin{(\pi (N-1)\delta)}}{\sin{\left(\pi \left(\frac{1}{N-1}-\delta\right)\right)}}\right| \right) \left(N-1 + \frac{\sin{(\pi (N-1)^2\delta)}}{\sin{(\pi (N-1)\delta)}} \right),
$$
whenever $\frac{1}{2(N-1)^2}<\delta< \frac{1}{N-1}-\beta$. Furthermore, using Lemma \ref{L-p_k=msN} and Lemma \ref{L-p_k=1}, the constants can be simplified as follows: 
$$
\left(1-(N-1)\sin{\frac{1}{N-1}} \right) \left(N-1 - \frac{1}{\sin{\frac{\pi}{2(N-1)}}} \right)  \leq A  
$$
and 
$$
 B\leq \left(1+(N-1)\sin{\frac{1}{N-1}} \right) \left(N-1 + \frac{1}{\sin{\frac{\pi}{2(N-1)}}} \right).
$$
\begin{FlushRight}
$\square$
\end{FlushRight}

\section{Remarks and open problems}

We begin this section by providing details on some of the examples that were mentioned in the introduction. The first example demonstrates how even minor perturbations in the domain can affect an exponential basis.  
\begin{Ex}\label{Ex-Theorem-main}
    Let $\cal I_2 = [0,1)\cup [3,4)$ and $\cal I_{2,\frac{1}{N}} = [0,1)\cup \left[3-\frac{1}{N},4-\frac{1}{N}\right)$ for some $N\in\N$. Then 
    $$
\B = \bigcup_{j=0}^{1} \left\{e^{2\pi i\left(n+\frac{j}{2}\right)x}\right\}_{n\in\Z}
$$
is an orthogonal basis for $L^2(\cal I_2)$, but it is not a basis for $L^2(\cal I_{2,\epsilon})$.
\end{Ex}

   By using Theorem \ref{T-matrix} the matrix associated to   $\B$ and $I_2$ is
    $$
    \G = \begin{pmatrix}
        1 & 1  \\
        1& -1 \\
    \end{pmatrix}.
    $$
 This matrix is orthogonal, so $\B$ is an orthogonal basis for $L^2(\cal I_2)$.

    We apply the linear transformation $\cal L(x)=N x$ to $I_{2, 1/N}$ and get $\cal I_{2N,1} = [0,N)\cup [3N-1,4N-1)$. Using Theorem \ref{T-matrix} we form the matrix $\G$ for $\cal I_{2N,1}$ and to  
        $$
\B = \bigcup_{j=0}^{2N-1} \left\{e^{2\pi i\left(n+\frac{j}{2N}\right)x}\right\}_{n\in\Z}.
$$
$\G=\{ e^{2\pi i   \frac{p_k(j-1)}{2N}}\}_{1\leq j,k\leq L}$ is a Vandremonde matrix with endpoints $p_k=\{0,...,N-1,3N-1,...,4N-2\}$. The endpoints $N-1$ and $3N-1$ of $I_{2N, 1}$  correspond to the same node $\frac{N-1}{2N}$ because  
$$
\left|\frac{3N-1}{2N} - \frac{N-1}{2N}  \right|_{\T}=0.
$$
Thus, $\G$ is singular and so $\B$ is not a basis for $L^2(\cal I_{2,\epsilon})$.
\medskip

The following example shows that removing portions of the domain cannot be compensated solely by adjusting the density of exponents based on the measure of the new domain. So, additional perturbation of the phase may sometimes be required.

\begin{Ex}\label{EX-Theorem1.2}
   The set $\B_3=\bigcup_{j=1}^3\left\{ e^{2 \pi i \left(n+\frac{j-1}{3}\right)x} \right\}$ is the standard orthogonal basis for $L^2([0,3))$. However, $\B_2=\bigcup_{j=1}^2\left\{ e^{2 \pi i \left(n+\frac{j-1}{2}\right)x} \right\}$, is not  a basis for $L^2([0,1)\cup [2,3))$.
\end{Ex}

By using Theorem \ref{T-matrix} the matrix associated to   $\B_2$ and $[0,1)\cup [2,3)$ is
\begin{equation*}
    \G_2 = \begin{pmatrix}
        1 & 1  \\
        1& 1 \\
    \end{pmatrix}.
\end{equation*}
It is easy to see that  $\G_2$ is singular. Therefore, $\B_2$ is not  a basis for $L^2([0,1)\cup [2,3))$.
\medskip

The next proposition follows from Theorem \ref{T-matrix}. However, as was mentioned earlier the most difficult part of this research is to estimate the Riesz constants of the basis. 
\begin{Prop}\label{P-basis}
    Let $M,N\in\N$ such that $M<N$. Then 
    $$
\B_{M} = \bigcup_{j=1}^{M} \left\{e^{2\pi i\left(n+\frac{j-1}{N}\right)x}\right\}_{n\in\Z},
$$
is a basis for $L^2(\cal I_M)$, where $\cal I_M$ is as in \eqref{E-I}.
\end{Prop}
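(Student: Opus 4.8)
The plan is to read the statement off from Theorem~\ref{T-matrix} (in the general form for finite unions of unit intervals recorded in \cite{AsipchukDeCarliLi2024}), which turns the question into the invertibility of a single explicit $M\times M$ matrix, and then to recognise that matrix as a square Vandermonde matrix whose nodes are automatically distinct.

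Concretely, I would write $\cal I_M=\bigcup_{k=1}^{M}[a_k,a_k+1)$ with integer left endpoints $0\le a_1<\cdots<a_M$; since $\cal I_M$ lies inside $I_N=[0,N)$ (the running setup), we have $a_M\le N-1$, so $a_1,\dots,a_M$ are $M$ distinct elements of $\{0,1,\dots,N-1\}$. The basis $\B_M$ corresponds, in the notation of Theorem~\ref{T-matrix}, to the frequency shifts $\delta_j=\tfrac{j-1}{N}$ for $j=1,\dots,M$, so $\B_M$ is a Riesz basis for $L^2(\cal I_M)$ if and only if the matrix
$$
\G_M=\Big\{\,e^{2\pi i\,\frac{(j-1)a_k}{N}}\,\Big\}_{1\le j,k\le M}
$$
is nonsingular. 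I would then observe that $\G_M$ is exactly the Vandermonde matrix $\bV(M,X)$ attached to the node set $X=\{a_k/N\}_{k=1}^{M}\subset\T$: its $k$-th column is $(1,\zeta_k,\zeta_k^{2},\dots,\zeta_k^{M-1})^{T}$ with $\zeta_k=e^{2\pi i a_k/N}$. Because the $a_k$ are pairwise distinct and lie in $\{0,1,\dots,N-1\}$ — this is the only place the hypothesis $M<N$ enters, and it serves merely to guarantee that such a configuration of intervals exists inside $[0,N)$ — the $\zeta_k$ are $M$ distinct $N$-th roots of unity, i.e.\ the nodes $a_k/N$ are distinct points of $\T$. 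By the classical rank criterion for Vandermonde matrices recalled in Section~2 (a square Vandermonde matrix with distinct nodes has full rank), $\G_M$ has rank $M$ and is therefore nonsingular, so Theorem~\ref{T-matrix} yields that $\B_M$ is a Riesz basis for $L^2(\cal I_M)$.

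I do not anticipate any genuine obstacle: the entire content of the proposition is this reduction via Theorem~\ref{T-matrix} together with the elementary invertibility of a square Vandermonde matrix with distinct nodes, which is exactly why it is presented as "following from Theorem~\ref{T-matrix}". The substantive companion problem — producing explicit, usable bounds on the Riesz (frame) constants of $\B_M$ — is a separate and delicate matter: it requires separating the nodes $\zeta_k$ in the wrap-around metric and invoking the clustering estimate of Lemma~\ref{L-Lemma5.1}, which is precisely what Theorem~\ref{T-main-3} and the technical lemmas of Subsection~\ref{S-UL} are designed to handle, and none of that machinery is needed for the bare basisness asserted here.
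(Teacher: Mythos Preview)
Your proposal is correct and follows essentially the same route as the paper: reduce via Theorem~\ref{T-matrix} to the nonsingularity of the $M\times M$ matrix $\G_M=\{e^{2\pi i (j-1)a_k/N}\}$, identify it as a square Vandermonde matrix, and conclude from the distinctness of the nodes $a_k/N$ on $\T$. You actually spell out the one point the paper leaves implicit---namely that $\cal I_M\subset[0,N)$ forces $a_1,\dots,a_M$ to lie in $\{0,\dots,N-1\}$, so the nodes are genuinely distinct on $\T$---but otherwise the arguments coincide.
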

\begin{proof}
    Using Theorem \ref{T-matrix} we form a matrix 
$$
\G=\left\{ e^{2\pi i \left(\frac{j-1}{N}\right)a_k}\right\}_{1\leq j,k\leq M}
$$
that corresponds to the set $\B_{M}$ on $\cal I_M$. $\G$ is a Vandermonde matrix with distinct nodes, so it is non-singular, and $\B_{M}$ is a basis for $L^2(\cal I_M)$.
    
\end{proof}
\medskip

The next proposition is well-known, but for the reader's convenience, we will prove it.
\begin{Prop}\label{P-basisMod}
   Let $I_s$ be as in \eqref{E-I}. If $a_j$ (mod $s$) are distinct, then 
$$
\B=\bigcup_{j=0}^{s-1}\left\{e^{2 \pi i \left(n+\frac{j}{s}\right)x}\right\}_{n\in\Z}
$$
is an orthogonal exponential basis for $L^2(I_s)$ with $A=B=s$.
\end{Prop}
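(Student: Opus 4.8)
The plan is to use Theorem \ref{T-matrix} to reduce the statement to a statement about an explicit matrix, and then exploit the hypothesis that the $a_j$ are distinct modulo $s$ to identify that matrix with a (scaled) Fourier matrix. Concretely, the set $\B$ can be written as $\B = \bigcup_{j=0}^{s-1}\{e^{2\pi i (n+j/s)x}\}_{n\in\Z}$, so in the notation of Theorem \ref{T-matrix} we take $\delta_j = (j-1)/s$ for $j=1,\dots,s$ (a relabelling of $j=0,\dots,s-1$). The associated matrix is $\G = \{e^{2\pi i \delta_j a_k}\}_{1\le j,k\le s} = \{e^{2\pi i (j-1)a_k/s}\}$, a Vandermonde-type matrix whose $k$-th node on the torus $\T = \R/\Z$ is $a_k/s \pmod 1$, equivalently $a_k \pmod s$ rescaled.

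Next I would observe that since the $a_k$ are distinct modulo $s$ and there are exactly $s$ of them, they form a complete residue system modulo $s$; hence the multiset $\{a_k \bmod s\}$ equals $\{0,1,\dots,s-1\}$ up to permutation. Therefore $\G$ is obtained from the standard $s\times s$ discrete Fourier matrix $F = \{e^{2\pi i (j-1)(k-1)/s}\}_{1\le j,k\le s}$ by a permutation of columns. Since $\frac{1}{\sqrt s}F$ is unitary, $\frac{1}{\sqrt s}\G$ is unitary as well, so all singular values of $\G$ equal $\sqrt s$. By Theorem \ref{T-matrix}, $\B$ is a Riesz basis for $L^2(\cal I_s)$ and its optimal frame constants are $A = B = (\sqrt s)^2 = s$. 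Orthogonality then follows because a Riesz basis with $A = B$ is, after normalization, a tight frame; here one can verify directly that $\langle e^{2\pi i(n+j/s)x}, e^{2\pi i(n'+j'/s)x}\rangle_{L^2(\cal I_s)} = 0$ unless $(n,j)=(n',j')$, using that $\sum_{k} e^{2\pi i (a_k/s)(j-j')} = 0$ whenever $j\not\equiv j' \pmod s$ (which is exactly the column-orthogonality of $F$), together with the fact that each interval $[a_k, a_k+1)$ has unit length so $\int_{a_k}^{a_k+1} e^{2\pi i \ell x}\,dx = 0$ for nonzero integers $\ell$.

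There is essentially no serious obstacle here: the only point requiring a little care is bookkeeping the two ways the index set is written (the union over $j=0,\dots,s-1$ of translated integer lattices versus a single lattice $\frac1s\Z$), and making sure the "complete residue system" observation is applied correctly — i.e.\ that distinctness of the $s$ values $a_j \bmod s$ forces surjectivity onto $\Z/s\Z$. Once that is in place, the identification of $\G$ with a permuted Fourier matrix is immediate and gives both the basis property and the exact constants $A=B=s$ at once. I would present the argument in the order: (1) write the matrix via Theorem \ref{T-matrix}; (2) note $\{a_k \bmod s\} = \{0,\dots,s-1\}$; (3) identify $\frac{1}{\sqrt s}\G$ as unitary; (4) read off Riesz constants from Theorem \ref{T-matrix}; (5) check orthogonality directly, or simply invoke that the $\G$ being a scalar multiple of a unitary matrix means the basis is orthogonal.
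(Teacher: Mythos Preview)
Your proposal is correct and follows essentially the same route as the paper: both invoke Theorem~\ref{T-matrix}, identify the associated matrix $\G=\{e^{2\pi i (j-1)a_k/s}\}_{1\le j,k\le s}$ with a column-permuted $s\times s$ DFT matrix (using that the $a_k$ exhaust the residues mod $s$), and read off $A=B=s$ from the fact that $\frac{1}{\sqrt s}\G$ is unitary. Your write-up is in fact more explicit than the paper's about why the residue hypothesis forces $\G$ to be a permuted Fourier matrix; the only cosmetic point to tidy is the phrasing around Theorem~\ref{T-matrix} --- as stated there the frame constants are the extreme singular values of $\G$, whereas you (correctly, and consistently with how the paper itself uses the theorem elsewhere) take them to be the extreme eigenvalues of $\G^*\G$, i.e.\ the squares of the singular values.
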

\begin{proof}
Let  $a_j$ (mod $s$) are distinct.  By using Theorem \ref{T-matrix} the matrix associated to  $\B$ and $I_s$ is
    $$
\G=\left\{ e^{2\pi i \left(\frac{a_k(j-1)}{s}\right)}\right\}_{1\leq j,k\leq s}=\left\{ e^{2\pi i \left(\frac{(j-1)}{s}\right)}\right\}_{1\leq j,k\leq s}.
$$
$\G$ is an orthogonal Vandermonde matrix with singular values equal to $s$, so $\B$ is an orthogonal exponential basis for $L^2(I_s)$ with the frame constants $A=B=s$. 
\end{proof}

\medskip

Theorem \ref{T-main} demonstrates that if we slightly adjust the intervals in the domain, we can still obtain a basis by modifying a standard exponential basis on these intervals with a special $\delta$, and the Riesz constants can be estimated accordingly. Moreover, when all $\epsilon_j=0$, Theorem \ref{T-main} provides a basis for the original domain too.

It is important to note that Theorem \ref{T-main} applies only with standard exponential bases. However, a similar approach can be used for any domain with an exponential basis that can be associated with a Vandermonde matrix, as shown in Theorem \ref{T-matrix}. This method could potentially be applied to unions of intervals of length 1, but further research is needed, which we plan to pursue in the future. 
\medskip

In Theorem \ref{T-main-3}, we considered a standard exponential basis on union of intervals inside an interval of length $N$. The condition $2<M\leq\frac{N}{2}$ illustrates that the union of intervals is just a ``small" part of the interval $[0,N)$. The question, is how to find exponential bases on the ``large" portion, i.e. is the complement of a ``small" part of the interval $[0,N)$?

To answer this question we use Theorem 9 in \cite{PRW2024} that we state below. 
\begin{Thm}\label{T-Complement}
Let $\Delta>0$ and $S\subset [0,\Delta).$ Suppose that for some $\Lambda\subset \frac{1}{\Delta}\Z,$ $\E(\Lambda)$ is a Riesz basis for $L^2(S)$ with Riesz's constants $A$ and $B$. Then $\E(\frac{1}{\Delta}\Z\setminus\Lambda)$ is a Riesz basis for $L^2([0,\Delta)\setminus S)$ with $A'=\Delta-B$ and  $B'=\Delta-B$. 
\end{Thm}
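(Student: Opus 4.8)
The plan is to exploit that the \emph{full} system $\E(\tfrac1\Delta\Z)=\{e^{2\pi i nx/\Delta}\}_{n\in\Z}$ is an orthogonal basis of $L^2([0,\Delta))$ with every vector of norm $\sqrt{\Delta}$, so that $\{\Delta^{-1/2}e^{2\pi i nx/\Delta}\}_{n}$ is an orthonormal basis and, for every $f\in L^2([0,\Delta))$,
\[
\sum_{\lambda\in\Lambda}\bigl|\langle f,e^{2\pi i\lambda x}\rangle\bigr|^2+\sum_{\mu\in\frac1\Delta\Z\setminus\Lambda}\bigl|\langle f,e^{2\pi i\mu x}\rangle\bigr|^2=\Delta\,\|f\|^2 .
\]
First I would transfer the problem into $L^2([0,\Delta))$: a function $h\in L^2([0,\Delta)\setminus S)$ is exactly an $f\in L^2([0,\Delta))$ that vanishes on $S$, and then $\langle h,e^{2\pi i\mu x}\rangle_{L^2([0,\Delta)\setminus S)}=\langle f,e^{2\pi i\mu x}\rangle_{L^2([0,\Delta))}$. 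Feeding such an $f$ into the identity above, the frame inequalities \eqref{e2-frame} for $\E(\tfrac1\Delta\Z\setminus\Lambda)$ on $L^2([0,\Delta)\setminus S)$ become equivalent to a two-sided estimate of $\sum_{\lambda\in\Lambda}|\langle f,e^{2\pi i\lambda x}\rangle|^2$ for $f$ supported off $S$ --- an upper bound there producing the lower Riesz constant of the complement and vice versa --- and the splitting identity then converts those bounds into the Riesz constants of $\E(\tfrac1\Delta\Z\setminus\Lambda)$; the Riesz-sequence inequality \eqref{e2-RS} and completeness of the complement in $L^2([0,\Delta)\setminus S)$ come out of the same estimate.

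The substance --- and the step I expect to be the main obstacle --- is that this estimate has to be read off from the assumption that $\E(\Lambda)$ is a \emph{Riesz basis} (not merely a frame) for $L^2(S)$, even though the test functions now live on the complementary set. Put $M_\Lambda:=\overline{\operatorname{span}}\,\E(\Lambda)\subset L^2([0,\Delta))$ and $M_\Lambda^\perp=\overline{\operatorname{span}}\,\E(\tfrac1\Delta\Z\setminus\Lambda)$; orthonormality of $\{\Delta^{-1/2}e^{2\pi i\lambda x}\}_{\lambda\in\Lambda}$ gives $\sum_{\lambda\in\Lambda}|\langle f,e^{2\pi i\lambda x}\rangle|^2=\Delta\,\|P_{M_\Lambda}f\|^2$ with $P_{M_\Lambda}$ the orthogonal projection. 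Next I would rephrase the hypothesis geometrically: since $(c_\lambda)\mapsto\sum_\lambda c_\lambda e^{2\pi i\lambda x}$ is $\sqrt{\Delta}$ times an isometry of $\ell^2(\Lambda)$ onto $M_\Lambda$, the statement ``$\E(\Lambda)$ is a Riesz basis for $L^2(S)$ with constants $A,B$'' is equivalent to ``$w\mapsto w\chi_S$ is a \emph{bijection} of $M_\Lambda$ onto $L^2(S)$ with $\tfrac A\Delta\|w\|^2\le\|w\chi_S\|^2\le\tfrac B\Delta\|w\|^2$ for all $w\in M_\Lambda$''. The bijectivity --- rather than mere surjectivity, which would only encode the frame property --- records the exactness of the basis, equivalently $M_\Lambda\cap L^2([0,\Delta)\setminus S)=\{0\}$ and $M_\Lambda^\perp\cap L^2(S)=\{0\}$.

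I would then invoke the elementary fact about a pair of closed subspaces $K,L$ of a Hilbert space: the orthogonal projection onto $L$ restricts to a bounded-below bijection $K\to L$ iff the orthogonal projection onto $L^\perp$ restricts to a bounded-below bijection $K^\perp\to L^\perp$, and then the two restricted maps have the same extremal squared singular values. A clean proof uses Halmos's two-subspaces decomposition: after splitting off the four ``trivial'' pieces $K\cap L$, $K\cap L^\perp$, $K^\perp\cap L$, $K^\perp\cap L^\perp$ --- the bijectivity hypotheses force the two dangerous ones to vanish --- the pair $(K,L)$ is unitarily modelled on $2\times2$ blocks on which $\|P_Lw\|/\|w\|$ and $\|P_{L^\perp}w'\|/\|w'\|$ run over the same set $\cos\Theta$. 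Applying this with $K=M_\Lambda$ and $L=L^2(S)$ --- so that $K^\perp=M_\Lambda^\perp=\overline{\operatorname{span}}\,\E(\tfrac1\Delta\Z\setminus\Lambda)$ and $L^\perp=L^2([0,\Delta)\setminus S)$ --- shows that $w\mapsto w\chi_{[0,\Delta)\setminus S}$ is a bounded-below bijection of $M_\Lambda^\perp$ onto $L^2([0,\Delta)\setminus S)$ with matching squared bounds, which by the first two steps is precisely the assertion that $\E(\tfrac1\Delta\Z\setminus\Lambda)$ is a Riesz basis for $L^2([0,\Delta)\setminus S)$ with the stated Riesz bounds.

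The only genuinely delicate point should be the two-subspaces lemma of the third step, together with keeping the distinction between ``frame'' and ``Riesz basis'' (surjective versus bijective restriction map) visible throughout, so that \emph{exactness} --- not just completeness --- of the complement is obtained. The Parseval identity and the measure-theoretic bookkeeping in the first two steps are routine.
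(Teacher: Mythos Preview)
The paper does not prove this theorem; it only quotes it and points to \cite{PRW2024}, \cite{MM2009}, \cite{BCMS2019} for a proof. Your two-subspaces argument is a clean route to the \emph{qualitative} assertion---that $\E(\tfrac1\Delta\Z\setminus\Lambda)$ is a Riesz basis for $L^2([0,\Delta)\setminus S)$---and the equivalences you set up (Riesz basis for $L^2(S)$ $\Leftrightarrow$ $P_L|_K:K\to L$ is a bounded-below bijection, and then the symmetric statement for $P_{L^\perp}|_{K^\perp}$) are correct and do yield that conclusion.

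There is, however, a genuine gap in the step ``the two restricted maps have the same extremal squared singular values''. In the Halmos decomposition the bijectivity of $P_L|_K$ only kills the two pieces $K\cap L^\perp$ and $K^\perp\cap L$; it says nothing about $K\cap L$ and $K^\perp\cap L^\perp$, and these two are \emph{independent} of one another. On the generic $2\times2$ blocks the cosines do match, but $P_L|_K$ picks up the value $1$ exactly from $K\cap L$, whereas $P_{L^\perp}|_{K^\perp}$ picks up the value $1$ exactly from $K^\perp\cap L^\perp$, so the extremal singular values need not agree. Concretely, take $\Delta=1$, $S=[0,\tfrac13)$, $\Lambda=3\Z$. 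Then $\E(\Lambda)$ is an orthogonal basis of $L^2(S)$ with $A=B=\tfrac13$; here $K\cap L=\{0\}$, but $K^\perp\cap L^\perp\neq\{0\}$ (any $g$ supported on $[\tfrac13,1)$ with $g(y+\tfrac13)=-g(y)$ for $y\in[\tfrac13,\tfrac23)$ lies in it). For such a $g=\sum_{\mu\notin3\Z}c_\mu e_\mu$ one has $\|\sum c_\mu e_\mu\|^2_{L^2(S^c)}=\|g\|^2=\sum|c_\mu|^2$, so the optimal upper Riesz constant is $B'_{\rm opt}=1$, while the lower one is $A'_{\rm opt}=\tfrac13$. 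Thus your ``matching bounds'' $A'=A=\tfrac13$, $B'=B=\tfrac13$ fail on the upper side. Note also that even if your argument had gone through it would have produced $A,B$, not the paper's $\Delta-B,\Delta-A$; and the same example shows the constants $\Delta-B=\Delta-A=\tfrac23$ asserted in the statement (modulo the visible typo) are not valid Riesz bounds either. So your approach proves the Riesz-basis part, but to get honest constants you must keep track of $K\cap L$ and $K^\perp\cap L^\perp$ separately.
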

The proof of such result can be found in \cite[Proposition 2.1]{MM2009} and \cite[Corollary 5.6]{BCMS2019}. Theorem \ref{T-main-2} addresses the case of removing a single interval of length 1. However, the method presented in this article can be extended to handle the removal of multiple intervals, which we will explore in future work. Additionally, we will explore the application of the method in $\R^d$.

 \bibliographystyle{plain}
\bibliography{Main}

\end{document}